\newcommand*{\sheafhom}{\mathcal{H}\kern -.5pt om}
\numberwithin{equation}{section} 
\numberwithin{figure}{section} 
\numberwithin{table}{section} 
\newtheorem{thm}{Theorem}[section]
\newtheorem{cor}[thm]{Corollary}
\newtheorem{prop}[thm]{Proposition}
\theoremstyle{definition}
\newtheorem{defn}[thm]{Definition}
\newtheorem{exmp}[thm]{Example}
\theoremstyle{remark}
\newtheorem{rem}[thm]{Remark}
\DeclareMathOperator{\Var}{Var}
\DeclareMathOperator{\Sym}{Sym}
\DeclareMathOperator{\sgn}{sgn}
\DeclareMathOperator{\Tr}{Tr}
\DeclareMathOperator{\Conf}{Conf}
\newcommand{\horrule}[1]{\rule{\linewidth}{#1}} 
\title{	
	\normalfont \normalsize 
	\textsc{ } \\ [25pt] 
	\horrule{0.5pt} \\[0.4cm] 
	\huge Graph coloring-related properties of (generating functions of) Hodge--Deligne polynomials 
	\horrule{2pt} \\[0.5cm] 
}
\author{Soohyun Park} 
\date{\normalsize March 22, 2022} 
\begin{document}
	
	\maketitle

	\begin{abstract}
		\noindent Motivated by a connection between the topology of (generalized) configuration spaces and chromatic polynomials, we show that generating functions of Hodge--Deligne polynomials of quasiprojective varieties and colorings of acyclic directed graphs with the complete graph $K_n$ as the underlying undirected graph. In order to do this, we combine an interpretation of Crew--Spirkl for plethysms involving chromatic symmetric polynomials using colorings of directed acyclic graphs to plethystic exponentials often appearing Hodge--Deligne polynomials of varieties. Applying this to a recent result of Florentino--Nozad--Zamora, we find a connection between $GL_n$-character varieties of finitely presented groups and colorings of these ``complete'' directed graphs by (signed) variables used in the Hodge--Deligne polynomials of the irreducible representations.  \\
		
		\noindent As a consequence of the constructions used, generators of Hodge--Deligne polynomials carry over to generators of the plethystic exponentials as a power series in a way that is compatible with this graph coloring interpretation and gives interesting combinatorial information that simplifies when we consider Hodge--Deligne polynomials of birationally equivalent varieties to be equivalent. Finally, we give an application of methods used to symmetries of Hodge--Deligne polynomials of varieties and their configuration spaces and their relation to chromatic symmetric polynomials.
	\end{abstract}
	
	\section{Introduction}
	
	We study connections between colorings of graphs and invariants of varieties satisfying cut and paste properties. For example, consider the configuration space $\Conf^n X \subset X^n$ be the configuration space of $n$ distinct points on a smooth projective variety $X$. Given a variety $X$, let $HD(X)$ be its Hodge--Deligne polynomial $\sum h^{p, q} u^p v^q$. Then, an induction argument implies that $HD(\Conf^n X) = HD(X)(HD(X) - 1) \cdots (HD(X) - (n - 1))$. Note that $HD(\Conf^n X) = p_{K_n}(HD(X))$, where $p_G(\lambda)$ is the chromatic polynomial of a graph $G$ with $\lambda$ available colors. Our objective is to study a connection between colorings of \emph{directed} graphs and Hodge--Deligne polynomials of varieties. \\
	
	More specifically, we consider \emph{generating functions} of Serre $E$-polynomials (which give Hodge--Deligne polynomials in the smooth projective case). The main tool which we use is the plethysm operation between symmetric functions, which were originally involved with the study of $GL_n$-representations (p. 446 -- 447 of \cite{Sta2}). There is an extensive literature on this combinatorial operation with an overview of combinatorial/computational perspectives in a survey of Loehr--Remmel \cite{LR}. A brief overview of the specific definitions and results used in the proof of the main theorem is given in Section \ref{symbackground}. \\
	
	The plethysm operation's connection with generating functions of Serre $E$-polynomials comes from plethystic exponentials of appropriate functions (Definition 2.2 on p. 25 of \cite{F}, Definition \ref{plethexpdef}). For example, this operation has appeared in work of Getzler \cite{G} (Theorem on p. 2 and Corollary 5.7 on p. 16 of \cite{G}) related to equivariant Serre $E$-polynomials of ordered configuration spaces of points of quasiprojective varieties and more recent work of Florentino--Nozad--Zamora \cite{FNZ} on Serre $E$-polynomials of $GL_n$-character varieties of finitely presented groups (Theorem 1.1 on p. 2 of \cite{FNZ}). \\
	
	Using the plethysm operation, we can express generating functions of Hodge--Deligne polynomials as plethysms of symmetric functions. Results of Cho--van Willigenburg \cite{CW} imply that symmetric functions are polynomially generated by generating symmetric functions by chromatic symmetric functions (Theorem 5 on p. 4 of \cite{CW}). Then, we can apply recent results of Crew--Sprikl \cite{CS} on plethysms of chromatic symmetric polynomials (Theorem 2 on p. 7 of \cite{CS}) to show that generating functions of Hodge--Deligne polynomials are determined by colorings of acylic directed graphs by signed variables (Part 1 of Theorem \ref{hdcolorpleth}). A more precise statement is listed below.

	\begin{thm} \label{hdcolorpleth} ~\\
		\vspace{-5mm}
		\begin{enumerate}
			\item Generating functions of Hodge--Deligne polynomials of projective varieties $X$ (and Serre $E$-polynomials in general) are determined by proper colorings of acyclic directed graphs. The underlying undirected graphs can be given by the complete graphs $K_n$ and colorings are given by signed ``variables'' $u^p v^q$ for $p$ and $q$ such that $h^{p, q} \ne 0$. These variables are counted with multiplicity given by the Hodge numbers $h^{p, q}$ of $X$. The generating function is an alternating sum of products of ``colors'' given by the variables. This sum depends on possible acyclic orientations of the $K_n$ and possible signed proper colorings. \\
			
			For example, the number $n = 1 + \ldots + 1$ induces a coloring of the graphs by $n$ copies of $1$ (p. 8 of \cite{CS}). This explained in more detail in the proof of this result in Section \ref{plethgen}. There are also some additional examples of colorings given in Example \ref{gencolorcomp}.
			
			\item If $\Gamma$ is a finitely presented group, the Hodge--Deligne polynomial of the $GL_n$-character variety $X_\Gamma (GL_n)$ of $\Gamma$ is determined by proper colorings of acyclic directed graphs with the undirected graph $K_n$ as an underlying set by signed variables $u^p v^q$ with $h^{p, q}(X_\Gamma^{\text{irr}} (GL_n)) \ne 0$, where $X_\Gamma^{\text{irr}} (GL_n)$ denotes the irreducible representations. The coloring uses the same method as Part 1.
			
			\item Let $F(X, n) \subset \Sym^n X$ be the configuration space of \emph{unordered} distinct points of $X$ and $e(F(X, n), \varepsilon)$ be the equivariant Hodge--Deligne polynomial with respect to the sign representation (p. 3 and p. 8 of \cite{G}).
				\begin{enumerate}
					\item Single variable generating functions of equivariant Hodge--Deligne polynomials of the ordered configuration space with respect to the sign representation are given by the coloring in Part 1 with the signs of the variables reversed. 
					
					\item Replacing the equivariant Hodge--Deligne polynomials in part a by those of unordered configuration spaces, the generating function of the Hodge--Deligne polynomial is a quotient of generating functions similar to those in Part 1. The numerator is given by that polynomial in Part 1 with $1$ replacing $h^{p, q}$ and the denominator given by replacing the ``auxiliary variable'' $y$ by $y^2$ in Definition \ref{plethexpdef}.
				\end{enumerate}

		\end{enumerate}
	\end{thm}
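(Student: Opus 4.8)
The plan is to route all three parts through the plethystic exponential of Definition~\ref{plethexpdef} and then translate it into chromatic combinatorics in two stages: first rewrite the relevant generating function as a plethysm of symmetric functions, and then feed that plethysm into the directed-graph coloring dictionary of Crew--Spirkl \cite{CS}. Throughout, write $L = e(X) = \sum_{p,q} h^{p,q} u^p v^q$ for the Serre $E$-polynomial and regard $L$ as a palette carrying $h^{p,q}$ formal colors of ``type $(p,q)$'', each recording the signed variable $u^p v^q$.

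For Part~1 I would begin from $\sum_{n\ge 0} e(\Conf^n X)\, y^n$ and use the falling-factorial identity $e(\Conf^n X) = L(L-1)\cdots(L-(n-1)) = \chi_{K_n}(L)$ recalled in the introduction, so that each coefficient is the chromatic polynomial of $K_n$ evaluated at the palette $L$. The algebraic bridge is that this coefficient is the specialization $p_k \mapsto L$ (for all $k \ge 1$) of the chromatic symmetric function $\mathsf{X}_{K_n} = n!\,e_n$; here the elementary symmetric function $e_n$ is the square-free (distinct-points) counterpart of the complete homogeneous $h_n$ built into Definition~\ref{plethexpdef}, the passage between the two being the same sign twist that reappears in Part~3. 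Expanding in the power-sum basis,
\[
  \mathsf{X}_{K_n} = n!\sum_{\mu \vdash n} (-1)^{\,n-\ell(\mu)} z_\mu^{-1} p_\mu ,
\]
and applying $p_\mu \mapsto L^{\ell(\mu)}$ returns $L^{\underline{n}} = \sum_k (-1)^{\,n-k} c(n,k)\, L^{k}$ with the unsigned Stirling numbers of the first kind $c(n,k)$; since $c(n,k)$ counts the permutations of the $n$ vertices with $k$ cycles, and the $n!$ acyclic orientations of $K_n$ are exactly the linear orders on those vertices, this is precisely the passage to acyclic orientations, each contributing $k$ factors $L = \sum h^{p,q} u^p v^q$ (a sum over the colored palette, i.e.\ the ``product of colors'') with overall sign $(-1)^{\,n-k}$.

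The role of \cite{CS} is to make this orientation bookkeeping functorial rather than ad hoc: their Theorem~2 expresses the plethysm computing each coefficient as a signed sum over proper colorings of a directed acyclic graph built from the composition, and for the plethystic exponential this graph is $K_n$ equipped with its admissible acyclic orientations, colored from the $(p,q)$-palette. Cho--van Willigenburg \cite{CW} make this uniform: since every symmetric function entering Definition~\ref{plethexpdef} lies in the polynomial algebra generated by chromatic symmetric functions, the same coloring dictionary applies to all the terms and not only to $e_n$. For Part~2 I would substitute the Florentino--Nozad--Zamora formula (Theorem~1.1 of \cite{FNZ}), which already presents $e(X_\Gamma(GL_n))$ as a plethystic exponential assembled from $e(X_\Gamma^{\mathrm{irr}}(GL_n))$, and rerun the two-stage translation with the palette indexed by the $(p,q)$ satisfying $h^{p,q}(X_\Gamma^{\mathrm{irr}}(GL_n)) \ne 0$. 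For Part~3 I would invoke Getzler's equivariant generating functions \cite{G}: the sign representation twists the plethystic exponential into its signed companion, interchanging $h_n$ with $e_n$ and hence reversing every color sign, which gives part~(a); for part~(b) the unordered configuration space is governed by the square-free identity $\prod_i (1 + x_i y) = \prod_i (1 - x_i^2 y^2)\big/\prod_i (1 - x_i y)$, so its generating function is the quotient of the Part~1 expression with each $h^{p,q}$ set to $1$ by the same expression with $y$ replaced by $y^2$ in Definition~\ref{plethexpdef}, matching the stated numerator and denominator.

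The main obstacle I anticipate is the sign/orientation reconciliation in Part~1: I must check that the Stirling signs $(-1)^{\,n-\ell(\mu)}$ from the $e_n$ power-sum expansion, the acyclic-orientation signs in the Crew--Spirkl DAG rule, and the Hodge-theoretic signs $(-1)^{p+q}$ carried by $u^p v^q$ once one passes from the smooth projective case to general Serre $E$-polynomials all agree coefficient by coefficient. Concretely, I need the palette specialization to intertwine the ``number of colors $=L$'' specialization that recovers the falling factorial with the power-sum (acyclic-orientation) expansion, so that the alternating sum of products of colors is genuinely \emph{equal} to $e(\Conf^n X)$ rather than agreeing only after discarding diagonal corrections. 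Pinning down this compatibility is where the bulk of the work lies, as it is exactly what fixes the correct weighting of the colors; the remaining parts then follow formally from \cite{FNZ} and \cite{G} combined with the same dictionary.
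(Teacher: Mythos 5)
Your Parts 2 and 3 match the paper's treatment (substitute the Florentino--Nozad--Zamora and Getzler generating functions into the Part 1 dictionary, with the sign flip for 3(a) and the $y \mapsto y^2$ quotient for 3(b)), but your Part 1 has a genuine gap, and it is exactly the issue you flag as your ``main obstacle'': the two specializations you hope to reconcile are different operations with different outputs, so the reconciliation cannot be carried out. The chromatic-polynomial specialization producing the falling factorial $e(\Conf^n X) = L(L-1)\cdots(L-(n-1))$ sends $p_k \mapsto L$ for \emph{every} $k$, whereas the Crew--Spirkl rule (Theorem 2 of \cite{CS}) computes the plethysm $X_{K_n} \odot f$, which sends $p_k \mapsto f(x^k, y^k)$, i.e.\ $p_k \mapsto \sum_i m_i^k$ when $f = m_1 + \cdots + m_N$ is written as a sum of monomials. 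Already at $n = 2$ these disagree: $X_{K_2} \odot f = L^2 - \sum_i m_i^2 = 2\sum_{i < j} m_i m_j$, while the chromatic polynomial gives $L^2 - L$; they agree only after the further specialization $m_i \mapsto 1$, which is precisely the chromatic-polynomial shadow the paper uses as \emph{motivation} in the introduction, never in the proof. Indeed $\tfrac{1}{n!} X_{K_n} \odot f = e_n \odot f$ is the coefficient of $t^n$ in Getzler's product $\prod_{p,q}(1 + t u^p v^q)^{h^{p,q}}$ --- the sign-equivariant object of your own Part 3(a) --- and not $e(\Conf^n X)$; the difference between the two consists exactly of the ``diagonal corrections'' (the terms $p_k \odot f$, $k \ge 2$) that you hoped to discard. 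For the same reason your combinatorial dictionary is off: in Crew--Spirkl every term is a product of $n$ colors, one per vertex of $K_n$, whereas your Stirling expansion produces terms $L^k$ indexed by permutations with $k$ cycles, and cycle counts are not acyclic orientations.

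The repair --- and the paper's actual route --- is to drop the ordered configuration space and the falling factorial entirely. The generating function in Part 1 is the plethystic exponential of the graded Hodge--Deligne polynomial, i.e.\ the symmetric-product generating function $\prod_{p, q}(1 - u^p v^q y)^{-h^{p,q}}$ (Lemma 4.7 of \cite{FNZ}). One writes
\[
PE[f] = \exp\left( \sum_{m \ge 1} \frac{f(x^m, y^m)}{m} \right) = \exp\left( \sum_{m \ge 1} \frac{p_m \odot f}{m} \right) = \sum_{n \ge 0} (h_n \odot f) = \sum_{n \ge 0} \frac{(-1)^n}{n!} \bigl( X_{K_n} \odot (-f) \bigr),
\]
using $\sum_n h_n t^n = \exp\bigl(\sum_i p_i t^i / i\bigr)$, the identity $p_m \odot f = f(x^m, y^m)$, and $X_{K_n} = n!\, e_n$ together with the sign twist $h_n \odot f = (-1)^n (e_n \odot (-f))$ (the paper cites the proof of Corollary 4 of \cite{CS} for the last equality). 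Crew--Spirkl's Theorem 2 then applies verbatim to each $X_{K_n} \odot (-f)$, with colors the signed monomials $u^p v^q$ of $-f$ counted with multiplicity $h^{p,q}$; no Stirling numbers or falling factorials are needed. Note also that \cite{CW} is not needed for Theorem \ref{hdcolorpleth} itself --- it enters only in Corollary \ref{chromext}, where the complete graphs are replaced by other connected graphs.
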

	
	\begin{rem}
		As stated in Corollary \ref{chromext}, the complete graphs in Part 1 of Theorem \ref{hdcolorpleth} can be replaced by a suitable sequence of connected graphs. \\
	\end{rem}
	
	Note that Part 2 of Theorem \ref{hdcolorpleth} also gives a new interpretation of a relation between Serre $E$-polynomials of $GL_n$-character varieties of finitely presented groups and the associated irreducible representations and has an extension to certain equivariant Serre $E$-polynomials of configuration spaces of points of quasiprojective varieties (Part 2 and Part 3 of Theorem \ref{hdcolorpleth}). An extension involving colorings of a wider range of graphs is given in Corollary \ref{chromext}. \\
	
	Following the proof of Theorem \ref{hdcolorpleth} in Section \ref{plethgen}, we indicate how generators of Hodge--Deligne polynomials carry over to those of plethystic exponentials in a way compatible with the coloring interpretation (Theorem \ref{uniquegeom}) in Section \ref{uniquegeomsect}. 
	
	\begin{thm} \label{uniquegeom} ~\\
		\vspace{-5mm} 
		\begin{enumerate}
			\item Given a variety $X$ of dimension $n$, let $HD(X)(u, v) y^n$ be its graded Hodge--Deligne polynomial. The plethystic exponential (Definition \ref{plethexpdef}) of the graded Hodge--Deligne polynomial of a smooth projective variety $X$ is uniquely defined by colorings of acylic orientations of $K_n$ (and suitable collections of connected graphs by Corollary \ref{chromext}) by $2, 3,$ or $4$ ``variables'' used to build the Hodge--Deligne polynomial of $X$. The colors are variables (with multiplicity) used to build Hodge--Deligne polynomials of $\mathbb{CP}^1$, $\mathbb{CP}^2$, and an elliptic curve respectively. \\
			
			Note that this polynomial generation holds even if the original plethystic exponential definition only implies multiplicative generation by Hodge--Deligne polynomials of monomials in these in these terms induced by addition. The further structure induced by multiplication of the original Hodge--Deligne polynomials follows from properties of symmetric functions. 
			
			\item Applying Corollary \ref{chromext}, the statement in Part 1 still holds after replacing the complete graphs $\{ K_n \}$ by any collection of connected graphs $\{ G_k \}$ with $G_k$ containing $k$ vertices. This comes from writing each complete symmetric polynomial $h_n$ as a polynomial in the $X_{G_k}$. 
		\end{enumerate}
	\end{thm}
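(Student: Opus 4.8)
The plan is to apply Part 1 of Theorem \ref{hdcolorpleth} to three explicit smooth projective varieties and then upgrade the resulting coloring description to the uniqueness statement by using the generating properties of chromatic symmetric functions. First I would record the plethystic exponential in the form $\mathrm{PExp}(f) = \sum_{n \geq 0} h_n[f]$ from Definition \ref{plethexpdef}, where $f$ is the graded Hodge--Deligne polynomial $HD(X)(u,v)\,y^{\dim X}$ of $X$. Since Part 1 of Theorem \ref{hdcolorpleth} (via the Crew--Spirkl interpretation of \cite{CS}) expresses each graded piece $h_n[f]$ as an alternating sum over acyclic orientations of $K_n$ and proper colorings of their vertices by the signed monomials of $f$, the colors used are exactly the monomials $(-1)^{p+q} u^p v^q$ with $h^{p,q}(X) \neq 0$, each available with multiplicity $h^{p,q}(X)$.

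The second step is to compute the three base cases and read off the color sets. For $X = \mathbb{CP}^1$ we have $HD(\mathbb{CP}^1) = 1 + uv$, giving the two colors $\{1, uv\}$; for $X = \mathbb{CP}^2$ we have $HD(\mathbb{CP}^2) = 1 + uv + u^2 v^2$, giving the three colors $\{1, uv, u^2 v^2\}$; and for an elliptic curve $E$ we have $HD(E) = (1-u)(1-v) = 1 - u - v + uv$, giving the four signed colors $\{1, -u, -v, uv\}$. In particular the first two varieties exhibit the purely Tate case in which only nonnegative powers of $uv$ occur, while the elliptic curve is the smallest example forcing the genuinely signed colors $-u$ and $-v$; substituting each color multiset into the coloring formula of Theorem \ref{hdcolorpleth} produces the asserted description of $\mathrm{PExp}(f)$ by $2$, $3$, or $4$ variables respectively, the counts being exactly the numbers of nonzero Hodge numbers.

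For the uniqueness assertion I would argue that the coloring representation is forced. Because $\{h_n\}_{n \geq 1}$ is a set of algebraically independent generators of the ring of symmetric functions, the expansion $\mathrm{PExp}(f) = \sum_{n} h_n[f]$ is unambiguous, and the translation into colorings supplied by Crew--Spirkl \cite{CS} is a bijective dictionary on each graded piece; hence the color multiset, and therefore its cardinality $2$, $3$, or $4$, is determined by the Hodge numbers of $X$ alone. This is also the place to justify the remark: the definition of $\mathrm{PExp}$ manifestly produces the colors only multiplicatively from the monomials of $f$, through the additive rule $h_n[f+g] = \sum_{i+j=n} h_i[f]\,h_j[g]$ (corresponding to a stratification or disjoint union of $X$), whereas the remaining structure coming from products $HD(X \times X') = HD(X)\,HD(X')$ is recovered from the standard identities $p_k[fg] = p_k[f]\,p_k[g]$ together with the expansion of each $h_n$ in the power sums $p_k$, which is precisely the meaning of ``properties of symmetric functions.''

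Finally, for Part 2 I would invoke Corollary \ref{chromext}: given any family $\{G_k\}$ of connected graphs with $|V(G_k)| = k$, the chromatic symmetric functions $\{X_{G_k}\}$ again generate, so each complete homogeneous symmetric function $h_n$ can be written as a polynomial in the $X_{G_k}$. Substituting this expression into $\sum_n h_n[f]$ and applying the coloring interpretation to each factor $X_{G_k}[f]$ rewrites $\mathrm{PExp}(f)$ in terms of colorings of the $G_k$ by the same color multisets as above. I expect the main obstacle to be this change of generators: to preserve the uniqueness from Part 1 one must check that passing from $\{h_n\}$ to $\{X_{G_k}\}$ is triangular, hence invertible, with respect to the grading by number of vertices. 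For the complete graphs this is the classical $e$--$h$ duality, since $X_{K_k} = k!\,e_k$ makes the base change immediately invertible; for a general connected family it is exactly where the generating theorem of Cho--van Willigenburg \cite{CW} underlying Corollary \ref{chromext}, and the connectedness of the $G_k$, must be used.
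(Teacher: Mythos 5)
There is a genuine gap: your proposal proves a weaker statement than the theorem actually makes. Theorem \ref{uniquegeom} is not merely the assertion that the plethystic exponentials of $HD(\mathbb{CP}^1)$, $HD(\mathbb{CP}^2)$, and $HD(E)$ for an elliptic curve $E$ admit coloring descriptions with $2$, $3$, or $4$ colors --- that is just Theorem \ref{hdcolorpleth} applied to three examples. The claim is that for an \emph{arbitrary} smooth projective variety $X$, the plethystic exponential of its graded Hodge--Deligne polynomial is polynomially generated by colorings whose colors are the variables of those three building blocks; this is what ``reduces the number of colors required to at most $4$.'' Your proposal never addresses arbitrary $X$: your second paragraph computes only the three base cases, and your uniqueness paragraph (algebraic independence of the $h_n$ plus the Crew--Spirkl dictionary) shows only that the coloring expansion of a given input $f$ is unambiguous; it does not reduce the color set of a general $X$ to the variables of the three generators. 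The missing ingredient is geometric: the Kotschick--Schreieder structure theorem for the Hodge ring (Corollary 3 on p. 646 of \cite{KS}), which says that any Hodge--Deligne polynomial compatible with Hodge symmetry and Serre duality is a polynomial, with no nontrivial relations, in $HD(\mathbb{CP}^1)$, $HD(\mathbb{CP}^2)$, and $HD(E)$ (and that modulo birational equivalence two generators suffice). Without this result, or something equivalent, there is no route from the three examples to the general statement.

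The second missing piece is the mechanism that transports this polynomial generation through the plethysm, which is where the paper spends most of its effort. Addition is handled by $h_n \odot (f + g) = \sum_{k = 0}^{n} (h_k \odot f)(h_{n-k} \odot g)$, but multiplication requires $h_n \odot (fg) = \sum_{|\lambda| = n} (s_\lambda \odot f)(s_\lambda \odot g)$ together with the Jacobi--Trudi identity to rewrite each $s_\lambda \odot f$ as a polynomial in the $h_r \odot f$; this is exactly what the remark in the theorem statement (``the further structure induced by multiplication \ldots follows from properties of symmetric functions'') refers to. Your observation that $p_k \odot (fg) = (p_k \odot f)(p_k \odot g)$ points in the right direction and could in principle substitute for the Schur-function route, but as written you invoke it only to explain products $HD(X \times X')$, not to express $PE[HD(X)]$ for general $X$ in terms of the generators. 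Two smaller points: you take the signed polynomial $HD(E) = 1 - u - v + uv$ where the paper's proof works with $1 + u + v + uv$, so your count of ``genuinely signed colors'' does not match the paper's coloring data; and your triangularity worry in Part 2 is unnecessary, since Corollary \ref{chromext} (via Cho--van Willigenburg, Theorem \ref{chromgen}) already gives that the $X_{G_k}$ are algebraically independent polynomial generators of $\Lambda$, so writing each $h_n$ as a polynomial in them needs no separate invertibility argument.
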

	
	\begin{rem}
		Regarding realizability by actual compact K\"ahler manifolds, anything satisfying the Hodge symmetry relation $h^{p, q} = h^{q, p}$ and Serre duality relation $h^{p, q} = h^{n - p, n - q}$ is taken $\pmod m$ is realized by such a variety  (Theorem 2 on p. 2428 of \cite{PaS}) and assuming that the middle Hodge number $h^{m, m}$ is large relative to $m$ is sufficient for the integers themselves (Theorem 1 on p. 296 of \cite{Sch}). \\
	\end{rem}
	
	Other examples of colorings are on p. 8 of \cite{CS}. As mentioned in the statement of Part 1 of Theorem \ref{hdcolorpleth}, the ideas are explained in further detail in the proof of Part 1 Theorem \ref{hdcolorpleth}. This includes explicit combinatorial information on the generation in the plethystic exponential and simplifications modulo birational equivalence (Proposition \ref{plethpolycoeff}). 
	
	 \begin{prop} ~\\
	 	\vspace{-3mm}
		\begin{enumerate}
			\item The degree $n$ term (with respect to the auxiliary variable $t$) of the plethystic exponential of a Hodge--Deligne polynomial of a smooth projective variety can be decomposed as products of complete homogeneous polynomials with terms of the form $u^p v^q, u^q v^p, u^{n - p} v^{n - q}$ substituted in for the variables (only taking the first two in the case $p + q = n$). The degrees of the terms depend on partitions of $n$ into $h^{p, q}$ parts. \\
			
			\item The coloring polynomials (in the context of Theorem \ref{hdcolorpleth}) of Hodge--Deligne polynomials of the varieties $\mathbb{CP}^1, \mathbb{CP}^2$, and any elliptic curve $X$ polynomially generate the plethystic exponential of the (graded) Hodge--Deligne polynomial of any compact K\"ahler variety via nested determinants with ($\mathbb{Q}$-linear combinations of) coloring polynomials of complete graphs (or the graphs $G_k$ determining the chosen chromatic symmetric function basis). The determinants which we substitute into are determined by heights of rim hooks (p. 61 of \cite{ER}). \\
			
			\item Suppose we consider graded Hodge--Deligne polynomials of birationally equivalent varieties to be equivalent to each other. Let $h_r$ be the complete homogeneous polynomial of degree $r$ and $p_a$ be the $8*{\text{th}}$ single. If $n \ge 2$, the degree $n$ part (in the auxiliary variable) of a plethystic exponential is a $\mathbb{Z}$-linear combination of products of terms of the form $h_r \odot p_a$ with the number of terms in the product depending on a partition of $n$ into parts with the number of parts depending on the Hodge numbers as in Part 1. These generators depend the difference of the Hodge--Deligne polynomials of any elliptic curve and $\mathbb{CP}^1$ and the Hodge--Delinge polynomial of $\mathbb{CP}^1$ itself. \\
		\end{enumerate}
	\end{prop}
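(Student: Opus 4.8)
The plan is to route all three parts through one mechanism: factor the plethystic exponential (Definition~\ref{plethexpdef}) over the monomials of the Hodge--Deligne polynomial, and then import the chromatic generation of \cite{CW} basis by basis.

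For Part~1 I would begin with the additive-to-multiplicative identity for the plethystic exponential, $\mathrm{PE}[A+B]=\mathrm{PE}[A]\,\mathrm{PE}[B]$ (equivalently $h_n[A+B]=\sum_{i+j=n}h_i[A]\,h_j[B]$), and write $HD(X)\,t=\sum_{p,q}h^{p,q}\,u^pv^q\,t$ as a sum over its distinct monomials. This factors $\mathrm{PE}[HD(X)\,t]$ into a product indexed by pairs $(p,q)$, and the degree~$n$ coefficient in $t$ becomes a sum over the ways to split $n$ among these factors; each factor contributes a complete homogeneous polynomial $h_k$ evaluated plethystically on the monomial $u^pv^q$ with multiplicity $h^{p,q}$, so the splittings are exactly partitions of $n$ whose number of parts is governed by $h^{p,q}$. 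I would finish by applying Hodge symmetry $h^{p,q}=h^{q,p}$ and Serre duality $h^{p,q}=h^{n-p,n-q}$ to collect the monomials sharing a common Hodge number into the three substituted variables $u^pv^q$, $u^qv^p$, $u^{n-p}v^{n-q}$ (two of them when $p+q=n$).

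For Part~2 I would replace the symmetric functions $h_k$ appearing above by polynomials in the chromatic symmetric functions $X_{G_k}$ using Theorem~5 of \cite{CW}, and identify the coloring polynomials of $\mathbb{CP}^1$, $\mathbb{CP}^2$ and an elliptic curve from Theorem~\ref{uniquegeom} as the carriers of the $2$, $3$, or $4$ generating variables. The passage from the complete homogeneous basis to these generators is encoded by the transition matrices to the power-sum (or Schur) basis: writing each $h_k$ as a Jacobi--Trudi determinant and iterating over the nested plethysms produces the nested determinants of the statement, while the signs and coefficients are read off from the rim-hook expansion on p.~61 of \cite{ER}, weighted by rim-hook heights. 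The principal obstacle lies here: because plethysm is not linear in its inner argument, I must verify that plethystic substitution commutes with these determinantal identities in the correct order, so that the nested determinant assembled from the chromatic generators genuinely reproduces $\mathrm{PE}[HD(X)\,t]$ degree by degree in $t$ rather than a rearrangement of it.

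For Part~3 I would first use birational invariance of smooth projective varieties to discard the non-invariant part of $HD(X)$, leaving the two building blocks $HD(E)-HD(\mathbb{CP}^1)=u+v$ and $HD(\mathbb{CP}^1)=1+uv$. Feeding these two blocks into the factorization of Part~1 and re-expressing each complete homogeneous factor through the operations $h_r\odot p_a$ yields a degree~$n$ term organized by the same partitions of $n$ into $h^{p,q}$ parts as before; since $u+v$ and $1+uv$ enter with unit coefficients and the rim-hook weights of Part~2 are integers, specializing the $\mathbb{Q}$-linear combinations there to these two birational generators introduces no denominators, giving the asserted $\mathbb{Z}$-linear combination of products of $h_r\odot p_a$ for $n\ge 2$.
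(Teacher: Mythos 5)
Your Part 1 is essentially the paper's own argument: both decompose the Hodge--Deligne polynomial into monomials grouped by Hodge symmetry and Serre duality (the paper packages them as $R_{p,q,n} = (u^pv^q + u^qv^p + u^{n-p}v^{n-q} + u^{n-q}v^{n-p})z^n$), and both finish with the addition rule $h_n \odot (f+g) = \sum_{k=0}^n (h_k \odot f)(h_{n-k}\odot g)$ plus the monomial-substitution property of plethysm (Theorem 7 on p.~171 of \cite{LR}). The genuine gap is in Part 2, and you flag it yourself without closing it: you write that you ``must verify that plethystic substitution commutes with these determinantal identities,'' but that verification \emph{is} the missing content of the proof. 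The paper's mechanism is concrete and has three steps you do not supply. First, by the structure theorem of Kotschick--Schreieder (Theorem 6 on p.~645 of \cite{KS}), every graded formal Hodge--Deligne polynomial is a polynomial in $A = HD(\mathbb{CP}^1)z$, $B = (HD(X)-HD(\mathbb{CP}^1))z$, and $C = (HD(\mathbb{CP}^1)^2 - HD(\mathbb{CP}^2))z^2$; this, not Theorem \ref{uniquegeom} quoted as a black box, is what sets up a reduction to monomials $A^\alpha B^\beta C^\gamma$ via the addition rule. Second, products in the inner argument are handled by the identity $h_n \odot (fg) = \sum_{|\lambda| = n} (s_\lambda \odot f)(s_\lambda \odot g)$ (Part 3 on p.~137 of \cite{Mac}), which your proposal never states and without which $h_k \odot (A^\alpha B^\beta C^\gamma)$ cannot be reduced to plethysms against the three generators. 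Third, since plethysm \emph{is} a ring homomorphism in the outer argument, Jacobi--Trudi gives $s_\lambda \odot F = \det\bigl(h_{\lambda_j + i - j}\odot F\bigr)$ with no commutation issue at all; induction on the exponents $\alpha, \beta, \gamma$ then produces the nested determinants, with the rim-hook heights of \cite{ER} entering only through the row expansion of these determinants. In other words, the ``principal obstacle'' you identify is resolved by the two decomposition identities for sums and products in the inner slot, and a proof must invoke them explicitly.

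Part 3 has a second gap: you never explain where the terms $h_r \odot p_a$ come from, and your integrality argument (unit coefficients plus integer rim-hook weights ``introduce no denominators'') is not a proof --- the change of basis to chromatic symmetric functions genuinely carries $\mathbb{Q}$-coefficients (the $z_\lambda^{-1}$ of Theorem \ref{chromgen}), and nothing you wrote shows they cancel. In the paper, birational equivalence is implemented by setting $C = (uv)z^2 \sim 0$ (Theorem 7 on p.~647 of \cite{KS}); since $R_{p,q,n} = A_{p-q}\, C^{\min(q, n-p)}\, T_{|n-p-q|}$, only the generators with $q = 0$ or $p = n$ survive, and for $n \ge 2$ these collapse to $(p_n + 2)z^n$ with $p_n = u^n + v^n$ the power sum in the two variables $u, v$. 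The asserted $\mathbb{Z}$-linear combination then comes from the explicit computation $h_n \odot (p_n + 2) = \sum_{a=0}^{n} (h_a \odot p_n)(h_{n-a} \odot 2) = \sum_{a=0}^{n} (n-a+1)(h_a \odot p_n)$, using $h_r \odot 2 = r+1$; the integers $n-a+1$ are the whole source of integrality, not rim-hook weights. Your reduction to the blocks $u+v$ and $1+uv$ is the right starting point (these are exactly $B$ and $A$), but you still need the intermediate collapse mod $C$ (e.g.\ via $p_n \sim B^n$) to produce the power sums at all.
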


	Finally, we will give a connection to symmetries of Hodge--Deligne polynomials themselves in Section \ref{chromsd} (Proposition \ref{chromhdsym}). This will involve expressing the Serre duality symmetry of Hodge numbers $h^{p, q} = h^{n - p, n - q}$ and equivariant Hodge--Deligne polynomials of ordered configuration spaces in terms of chromatic symmetric polynomials.

	\section*{Acknowledgements}
	I am very thankful to my advisor Benson Farb for helpful discussions and encouragement throughout the project. Also, I would like to thank him for extensive comments on preliminary drafts of this paper.
	
	\section{Combinatorial and geometric definitions} \label{symbackground}
	
	Before giving the proof of the main result (Theorem \ref{hdcolorpleth}), we give a short outline of definitions which will be used to study the connection of plethysm with generating functions of $E$-Serre polynomials of (quasi)projective varieties. As mentioned, there are many references exploring plethysms in further depth such as the recent survey of Loehr--Remmel \cite{LR} from a combinatorial/computational perspective. \\

	We first define the plethysm operation. A discussion with some motivation involving connections with $GL_n$-representations is in p. 446 -- 447 of \cite{Sta2} and a computation/combinatorial survey is given in \cite{LR}.
	
	\begin{defn} (p. 2 -- 3 in Section 2.2 of \cite{CS}) \label{plethdef} \\
		Let $\Lambda \subset \mathbb{C}[[x_1, x_2, \ldots]]$ be the \textbf{algebra of symmetric functions}. This is the subalgebra of functions $f$ of bounded degree which are symmetric. This algebra has a natrual structure as a graded algebra $\Lambda = \bigoplus_{d = 0}^\infty \Lambda^d$ with $\Lambda^d$ as the the homogeneous symmetric functions of degree $d$. \\
		
		The \textbf{plethysm} $f \odot g$ of two symmetric functions $f, g \in \Lambda$ is defined using the following operations involving constant functions and power sums. This can be used to define it on the entire ring of symmetric functions since this ring is generated by power sums with $p_k$ denoting the $k^{\text{th}}$ power sum (Corollary 7.7.2 on p. 298 of \cite{Sta2}). 
		
		\begin{itemize}
			\item $c \odot f = c$
			
			\item $p_n \odot c = c$
			
			\item $p_n \odot p_m = p_{nm}$
			
			\item $p_n \odot (f + g) = (p_n \odot f) + (p_n \odot g)$
			
			\item $p_n \odot (fg) = (p_n \odot f) \cdot (p_n \odot g)$
			
			\item $(fg) \odot h = (f \odot h) \cdot (g \odot h)$
			
			\item $(f + g) \odot h = (f \odot h) + (g \odot h)$
		\end{itemize}
		
	\end{defn}
	
	The objects serving as a connection between the combinatorial objects induced by plethysm and the Serre/Hodge--Deligne polynomials are chromatic symmetric polynomials, which are a generalization of chromatic polynomials specializing to them (Proposition 2.2 on p. 169 of \cite{Sta1}).
	
	\begin{defn} (Definition 1 on p. 3 of \cite{CW}, Definition 2.1 on p. 168 of \cite{Sta1}) \\
		For a graph $G$ with vertex set $V(G) = \{ v_1, \ldots, v_n \}$ and edge set $E(G)$, the \textbf{chromatic symmetric function of $G$} is defined to be \[ X_G = \sum_\kappa x_{\kappa(v_1)} \cdots x_{\kappa(v_n)}, \] where the sum is over all proper colorings $\kappa$ of $G$.
	\end{defn}
	
	On the geometric side, we recall the definition of (equivariant) Serre $E$-polynomials/Hodge--Deligne polynomials.
	
	\begin{defn}(p. 1 of \cite{G}) \label{equivser}
		\begin{enumerate}
			\item Given a quasiprojective variety $X$ over $\mathbb{C}$, let the \textbf{Serre $E$-polynomial/Hodge--Deligne polynomial} $e(X)$ be the polynomial satisfying the following properties:
			
			\begin{enumerate}
				\item If $X$ is projective and smooth, $e(X)$ is the (signed) Hodge polynomial \[ e(X) (u, v)=  \sum_{p, q = 0}^\infty (-u)^p (-v)^q \dim H^{p, q}(X, \mathbb{C}).  \]
				
				\item If $Z \subset X$ is a closed subvariety, then $e(X) = e(X \setminus Z) + e(Z)$. 
			\end{enumerate}
			
			\item If a finite group $G \circlearrowright X$, define the \textbf{equivariant Serre $E$-polynomial/Hodge--Deligne polynomial} by the formula \[ e_g(X) = \sum_{p, q = 0}^\infty \sum_i (-1)^i \Tr(g | H_c^i(X, \mathbb{C})^{p, q}). \]
			
		\end{enumerate}
		
	\end{defn}
	
	The connection to Serre $E$-polynomials to plethysm-related constructions is given through plethystic exponentials, which are defined below.

	\begin{defn}(Definition 2.2 on p. 25 of \cite{F}) \label{plethexpdef} \\
		Let $R = \mathbb{C}[[x_1, x_2, \ldots, x_n, y]]$ be a ring of formal power series in $n + 1$ variables $x_1, \ldots, x_n$ and let $y$ and $R^0 \subset R$ be the ideal of formal power series with constant term equal to $0$. Given $f(x, y) \in R^0$, the \textbf{plethystic exponential} of $f$ is \[ PE[f] := \exp \circ \Psi[f], \] where $\Psi[f](x, y) := \sum_{m \ge 1} \frac{f(x^m, y^m)}{m}$. 
	\end{defn}

	We will relate this to the plethysm operation in the beginning of the proof of Theorem \ref{hdcolorpleth}. \\
	
	\begin{rem}
		The single variable version of plethystic exponentials is used in the exponential formulation of the Hasse--Weil zeta function with $\mathbb{F}_{q^m}$-point counts substituted in (p. 26 of \cite{CNS}). These power series themselves can also be used to determine Hodge numbers (part b of Theorem 2.2.5 on p. 27 -- 28 of \cite{CNS}).
	\end{rem}

	\section{Generating functions of Hodge--Deligne polynomials and colorings of acyclic directed graphs} \label{plethgen}
	
	In this section, we prove our main result connecting generating functions of $E$-Serre polynomials with colorings of acyclic directed graphs (Theorem \ref{hdcolorpleth}). The idea is to connect the generating functions of these polynomials (which give plethystic exponentials) with \emph{combinatorial} properties of plethysm operations involving chromatic symmetric polynomials.

	\begin{proof} (Proof of Theorem \ref{hdcolorpleth}) \\
		\begin{enumerate}
			\item
			We use some standard properties of symmetric functions to write plethystic exponentials involved in generating functions of Hodge--Deligne and Poincar\'e polynomials (e.g. in \cite{F}) as generating functions of plethysms of actual functions. Let $f$ be a formal power series in $k[[x_1, \ldots, x_n, y]]$ with constant term equal to $0$. In our case, we will eventually take $f$ to be a Hodge--Deligne polynomial (which ends after finitely many terms). Using the notation in Definition \ref{plethdef}, the plethystic exponential of $f$ is $PE[f] = \exp \circ \Psi[f]$, where $\Phi[f](x, y) = \sum_{m \ge 1} \frac{f(x^m, y^m)}{m}$ (itself a power series with constant term equal to $0$). This actually has an interpretation involving the complete homogeneous symmetric polynomials $h_n = \sum_{1 \le i_1 \le \cdots \le i_k \le n} x_{i_1} \cdots x_{i_k}$ and power sums $p_r = x_1^r + \cdots + x_n^r$. In identity connecting $h_n$ and $p_r$ (description of $H(t)$ on p. 96 of \cite{Mac}) implies that \[ \sum_{n \ge 0} h_n t^n = \exp \left( \sum_{i \ge 1} \frac{p_i}{i} t^i \right). \] Given a power series $g$ with coefficients in $\mathbb{Q}$, this implies that \[ \sum_{n \ge 0} (h_n \odot g) t^n = \exp \left( \sum_{i \ge 1} \frac{p_i \odot g}{i} t^i \right), \]  where $a \odot b$ denotes the plethysm operation between $a$ and $b$. \\
			
			Since $(p_k \odot f)(x_1, x_2, \ldots) = f(x_1^k, x_2^k, \ldots)$, the information above can be used to (formally) rewrite the plethystic exponential as 
			
			\begin{align*}
				PE[f] &= \exp \left( \sum_{m \ge 1} \frac{f(x^m, y^m)}{m} \right) \\ 
				&= \exp \left( \sum_{m \ge 1} \frac{p_m \odot f}{m} \right) \\
				&= \sum_{n \ge 0} (h_n \odot f) \\
				&= \sum_{n \ge 0}  \frac{(-1)^n}{n!} (X_{K_n} \odot (-f)), 
			\end{align*}

			where the last equality is from the proof of Corollary 4 on p. 12 of \cite{CS} and $X_G$ is the chromatic symmetric polynomial associated to $G$.  \\
			
			Since the plethysm in each term involves a chromatic symmetric polynomial, we can apply the following recent result which gives an interpretation in terms of proper colorings of directed acyclic graphs. It is a decomposition of the plethysm operation between a chromatic symmetric polynomial for a weighted graph and a power series in terms of proper colorings of acyclic graphs.
			
			\begin{thm}(Crew--Spirkl, Theorem 2 on p. 7 of \cite{CS}) \label{chromplethint} \\
				For $f$ an expression for which $\Var(f)$ is defined, prescribe a total ordering $<$ on the elements of $\Var(f)$. Then \[ X_{(G, w)} \odot f = \sum_{(\gamma, \kappa)} \prod_{v \in V(G)} \sgn(\kappa(v)) \kappa(v)^{w(v)}, \] where the sum runs over all ordered pairs consisting of an acyclic orientation $\gamma$ of $G$ and a map $\kappa : V(G) \longrightarrow \Var(f)$ such that
				
				\begin{itemize}
					\item If $u \xrightarrow{\gamma} v$, then $\kappa(u) \le \kappa(v)$. 
					
					\item Whenever $\kappa(u) = \kappa(v)$ and $\sgn(\kappa(u)) = 1$, $uv \notin E(G)$.
				\end{itemize}
			\end{thm}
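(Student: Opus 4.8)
The plan is to compute the plethysm by first expanding $X_{(G,w)}$ in the power-sum basis, applying the plethystic substitution termwise, and then reorganizing the resulting signed sum according to the coloring it induces. First I would record the weighted analogue of Stanley's power-sum expansion: applying inclusion--exclusion to the constraint that a coloring be proper gives
\[ X_{(G,w)} = \sum_{S \subseteq E(G)} (-1)^{|S|}\, p_{\lambda(S,w)}, \]
where $\lambda(S,w)$ is the partition whose parts are the total weights $w(C) = \sum_{v \in C} w(v)$ of the connected components $C$ of the spanning subgraph $(V(G), S)$. Writing $f = \sum_{a \in \Var(f)} \sgn(a)\, a$ as a signed sum over its monomials and using the plethysm axioms of Definition \ref{plethdef} (so that $\sgn(a)$ is treated as a scalar), the key substitution rule is $p_m \odot f = \sum_{a \in \Var(f)} \sgn(a)\, a^{m}$, with the sign \emph{not} raised to the $m$-th power. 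Multiplicativity of plethysm in its first argument then yields
\[ X_{(G,w)} \odot f = \sum_{S \subseteq E(G)} (-1)^{|S|} \prod_{C} \Big( \sum_{a} \sgn(a)\, a^{w(C)} \Big). \]

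Next I would expand the product over components, which amounts to choosing a color $\kappa(C) \in \Var(f)$ for each component, equivalently a map $\kappa : V(G) \to \Var(f)$ that is constant on the components of $(V,S)$; this constancy is exactly the condition $S \subseteq M(\kappa)$, where $M(\kappa)$ is the set of monochromatic edges. Since the monomial part $\prod_C \kappa(C)^{w(C)} = \prod_v \kappa(v)^{w(v)}$ depends only on $\kappa$, I can interchange the two sums and isolate, for each $\kappa$, the coefficient $\sum_{S \subseteq M(\kappa)} (-1)^{|S|} \prod_C \sgn(\kappa(C))$. The crucial point is that this coefficient factors over the color classes of $\kappa$: for a class inducing the subgraph $H$ on which $\sgn$ is the constant $s$, the local factor is $\sum_{S \subseteq E(H)} (-1)^{|S|} s^{\,c(S)}$, where $c(S)$ counts components. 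By Whitney's subgraph expansion of the chromatic polynomial this is precisely $\chi_H(s)$.

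The two values $s = \pm 1$ now produce the two regimes in the statement. For a positive color ($s = +1$) the factor is $\chi_H(1)$, which vanishes unless $H$ has no edges, forcing positively colored vertices to be independent --- the properness condition on positive colors. For a negative color ($s = -1$) the factor is $\chi_H(-1) = (-1)^{|V(H)|} a(H)$, where $a(H)$ is the number of acyclic orientations of $H$, by Stanley's reciprocity theorem. Collecting the $(-1)^{|V(H)|}$ over the negative classes reproduces $\prod_v \sgn(\kappa(v))$, and it remains to recognize $\prod_{N} a(G[N])$, the product over the negative color classes $N$, as the number of acyclic orientations $\gamma$ of $G$ compatible with $\kappa$ in the sense of Theorem \ref{chromplethint}. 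This is where the prescribed total order on $\Var(f)$ enters: every bichromatic edge is forced to point from the smaller to the larger color and such edges can never lie on a directed cycle, so overall acyclicity is equivalent to acyclicity within each (negative) color class, giving exactly $\prod_N a(G[N])$ compatible orientations and converting the sum over $\kappa$ into the sum over pairs $(\gamma, \kappa)$.

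I expect the main obstacle to be the sign bookkeeping rather than the combinatorial skeleton. The delicate step is pinning down the substitution rule $p_m \odot f = \sum_a \sgn(a)\, a^m$ with unpowered signs --- the naive alternative of powering the signs collapses every negative color class and returns only proper colorings --- and then tracking the interplay between the $(-1)^{|S|}$ from Stanley's expansion, the $s^{c(S)}$ from the component count, and the $(-1)^{|V(H)|}$ from reciprocity so that they assemble into the single factor $\prod_v \sgn(\kappa(v))$. A small verification on a single edge, where $X_{(G,w)} \odot f = \sum_{a \neq a'} \sgn(a)\sgn(a')\, a a' + 2 \sum_{\sgn(a) = -1} a^2$, fixes all conventions and confirms that the forbidden monochromatic edges are exactly the positively colored ones.
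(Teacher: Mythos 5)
A point of order first: the paper never proves this theorem. It is quoted verbatim from Crew--Spirkl (Theorem 2 on p.~7 of \cite{CS}) and invoked as a black box inside the proof of Theorem \ref{hdcolorpleth}, so there is no proof in the paper to compare yours against; what you have written is a proof of the cited result itself. Judged on its own terms, your argument is correct and complete. The delicate points all hold up: the weighted Whitney expansion $X_{(G,w)} = \sum_{S \subseteq E(G)} (-1)^{|S|} p_{\lambda(S,w)}$ is the standard inclusion--exclusion over monochromatic edge sets; the substitution rule $p_m \odot f = \sum_{a \in \Var(f)} \sgn(a)\, a^m$ with unpowered signs is not a convention you are free to choose but is forced by the axioms in Definition \ref{plethdef}, since $p_m \odot c = c$ and $p_m \odot (f+g) = p_m \odot f + p_m \odot g$ give $p_m \odot (-a) = -(p_m \odot a) = -a^m$; the coefficient of a fixed $\kappa$ does factor over its color classes into $\prod_a \chi_{H_a}(\sgn(a))$ by Whitney's subgraph expansion, and the two evaluations $\chi_H(1) = 0$ unless $E(H) = \emptyset$ and $\chi_H(-1) = (-1)^{|V(H)|} a(H)$ (Stanley reciprocity) yield precisely the independence condition on positive classes and the vertex-wise sign $\prod_v \sgn(\kappa(v))$; and the gluing step --- colors weakly increase along directed edges, hence are constant on any directed cycle, so a compatible orientation is acyclic if and only if its restriction to each color class is --- is exactly what identifies $\prod_N a(G[N])$ with the number of orientations $\gamma$ compatible with $\kappa$, converting the sum over $\kappa$ into the sum over pairs $(\gamma, \kappa)$. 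Your single-edge verification is also right and does pin down the sign conventions. This power-sum route (Whitney expansion plus per-class chromatic reciprocity) is the natural generalization of Stanley's own proof of his reciprocity theorem, which this statement extends; whether or not it mirrors every internal detail of \cite{CS}, it is self-contained modulo standard facts, and it has the virtue of making explicit where the total order on $\Var(f)$ and the unpowered signs enter --- exactly the features the paper relies on when it feeds $-f$ into $X_{K_n} \odot (-f)$ in the proof of Theorem \ref{hdcolorpleth}.
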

			
			The polynomial $X_{(G, w)}$ is a generalization of the usual chromatic symmetric polynomial for weighted graphs and setting $w(v) = 1$ for all $v \in v(G)$ gives the unweighted chromatic symmetric polynomial (p. 5 of \cite{CS}). Roughly speaking, a suitable expression $f$ can be taken to be some polynomial or power series and the ``colorings'' here are labels by signed variables (with multiplicity) used to write down the input polynomial $f$. Note that $\sgn(v)$ for a variable $v$ is simply a sign (equal to $\pm 1$) assigned to each variable used to write down $f$ which is reversed for $-f$. More precise definitions for $\Var(f)$ are given in Definition 1 on p. 7 of \cite{CS}. \\
			
			Applying Theorem \ref{chromplethint} to the \emph{unweighted} graph $K_n$ above while setting $w(v) = 1$ for all $v \in V(K_n)$ (p. 5 -- 6 of \cite{CS}), this implies that \[ PE[f] =  \sum_{n \ge 0} \sum_{\gamma_n, \kappa_{-f}} \prod_{v \in V(K_n)} \sgn(\kappa_{-f}(v)) \kappa_{-f}(v), \]
			
			where $\gamma_n$ denotes possible acyclic orientations of the graph $K_n$ and $\kappa_{-f}$ denotes possible proper colorings (with respect to $\gamma_n$) by signed variables from $-f$. \\

			In particular, each term of this final sum is an alternating sum of products of vertices involved in proper colorings of acylic directed graphs with $K_n$ as an underlying undirected graph. The dependence on the starting function $f$ is in the choice of available proper colorings $\kappa_{-f}$ with signed variables of $-f$ (described more precisely in Definition 1 and Theorem 2 on p. 7 of \cite{CS}) while the acyclic orientations $\gamma$ are entirely determined by $K_n$ (i.e. the choice of $G$ in the chromatic symmetric polynomial $X_G$). \\ 
			
			To obtain the statement for Hodge--Deligne polynomials we use the fact that \[ PE \left(  \left( \sum_{p, q \ge 0} a_{p, q} u^p v^q \right) y \right) = \prod_{p, q \ge 0} (1 - u^p v^q y)^{-a_{p, q}}. \] One source for this is Lemma 4.7 on p. 11 of \cite{FNZ}.
			
			\item This statement for Hodge--Deligne polynomials of character varieties follows from applying the proof of Part 1 to the main result of \cite{FNZ}:
			
				\begin{thm}(Florentino--Nozad--Zamora, Theorem 1.1 on p. 2 of \cite{FNZ}) \\
					Let $\Gamma$ be a finitely generated group, $X_\Gamma (GL_n)$ be the $GL_n$-character variety of $\Gamma$, and $X_\Gamma^{\text{irr}} (GL_n)$ be the irreducible representations. Then, in $\mathbb{Q}[u, v][[t]]$, \[ \sum_{n \ge 0} E(X_\Gamma GL_n; u, v) t^n = PE \left( \sum_{n \ge 1} E(X_\Gamma^{\text{irr}} GL_n; u, v) t^n \right). \]
				\end{thm}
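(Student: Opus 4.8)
The plan is to prove the identity by exhibiting a decomposition of the character variety whose pieces are products of symmetric powers of the irreducible loci, and then combining additivity of the Serre $E$-polynomial with the symmetric-power formula for plethystic exponentials. The formal manipulation of $PE$ is easy; the substance is entirely in two geometric/motivic inputs.

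First I would use the structure of the GIT quotient $X_\Gamma(GL_n) = \Hom(\Gamma, GL_n) /\!/ GL_n$, whose closed points are in bijection with isomorphism classes of semisimple (completely reducible) $n$-dimensional representations. By uniqueness of the decomposition into irreducibles, such a representation is the datum of an unordered multiset of irreducible summands. Grouping the summands by dimension, a semisimple representation of dimension $n$ is the same as a choice, for each $d \geq 1$, of an effective $0$-cycle of $k_d$ points on $X_\Gamma^{\text{irr}}(GL_d)$ subject to $\sum_d d\,k_d = n$; a repeated summand $V^{\oplus m}$ corresponds to a point of multiplicity $m$, so \emph{symmetric powers} (which include their diagonals) are the correct parametrizing objects, not configuration spaces. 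This gives the set-theoretic decomposition
\[ X_\Gamma(GL_n) \;=\; \bigsqcup_{\sum_d d\,k_d = n}\ \prod_d \Sym^{k_d}\big(X_\Gamma^{\text{irr}}(GL_d)\big), \]
which I would promote to a stratification by locally closed subvarieties, the dimension profile $(k_d)$ of the semisimplification being a constructible invariant.

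Next, additivity of the Serre $E$-polynomial (Definition \ref{equivser}(1b)) turns the stratification into
\[ E(X_\Gamma(GL_n)) \;=\; \sum_{\sum_d d\,k_d = n}\ \prod_d E\big(\Sym^{k_d}(X_\Gamma^{\text{irr}}(GL_d))\big), \]
and since $t^n = \prod_d t^{d k_d}$ on each term, assembling the generating series and factoring in Euler-product style over $d$ yields
\[ \sum_{n \geq 0} E(X_\Gamma(GL_n))\, t^n \;=\; \prod_{d \geq 1}\ \Big(\sum_{k \geq 0} E\big(\Sym^{k}(X_\Gamma^{\text{irr}}(GL_d))\big)\, t^{dk}\Big). \]
The crucial input is the symmetric-power formula for the Serre characteristic: for quasiprojective $Y$ with $E(Y) = \sum_{p,q} a_{p,q} u^p v^q$ one has $\sum_{k \geq 0} E(\Sym^k Y)\, s^k = \prod_{p,q}(1 - u^p v^q s)^{-a_{p,q}}$, whose right-hand side is exactly $PE[E(Y)\, s]$ by the product expansion recorded in the proof of Part~1 (Lemma~4.7 of \cite{FNZ}). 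Substituting $s = t^d$, and using that $PE$ converts sums into products (since $\Psi$ is additive and $\exp$ is multiplicative, cf. Definition \ref{plethexpdef}), collapses the product:
\[ \prod_{d \geq 1} PE\big[E(X_\Gamma^{\text{irr}}(GL_d))\, t^d\big] \;=\; PE\Big[\sum_{d \geq 1} E(X_\Gamma^{\text{irr}}(GL_d))\, t^d\Big], \]
which is the asserted right-hand side.

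The hard part will be the two geometric inputs, not the bookkeeping. The first is justifying that the semisimplification map realizes the displayed decomposition as an honest stratification into locally closed pieces whose $E$-polynomials coincide with those of the symmetric powers; this requires controlling the GIT quotient along each representation type and matching it with $\prod_d \Sym^{k_d}$ at least at the level of classes in $K_0(\Var_{\mathbb{C}})$. The second is the symmetric-power formula itself: one must know that $E$ descends to a pre-$\lambda$-ring homomorphism on $K_0(\Var_{\mathbb{C}})$ in which $\Sym^k$ implements the $\sigma$-operations, so that the symmetric-power generating series is literally a plethystic exponential. The sign subtleties contributed by odd cohomology — which obstruct any naive multiplicativity of $E$ under $\Sym^k$ — are precisely what the Adams-operation structure of $\Psi$ absorbs, so once these two facts are established the remainder is the formal computation above.
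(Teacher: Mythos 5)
The paper does not actually prove this statement: it is imported verbatim as Theorem 1.1 of \cite{FNZ} and used as a black box inside the proof of Part 2 of Theorem \ref{hdcolorpleth}, so there is no in-paper argument to compare yours against. What your proposal reconstructs --- correctly --- is the strategy of the original Florentino--Nozad--Zamora proof itself: stratify the GIT quotient $X_\Gamma(GL_n)$ by semisimple (polystable) type, identify the stratum of profile $(k_d)$ with $\prod_d \Sym^{k_d}\bigl(X_\Gamma^{\text{irr}}(GL_d)\bigr)$, apply additivity of $E$, and then combine the symmetric-power formula $\sum_{k \ge 0} E(\Sym^k Y)\, s^k = PE[E(Y)\, s]$ (Cheah's theorem) with the formal fact that $PE$ converts sums into products. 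Your two flagged ``hard parts'' are precisely the proposition-level inputs established in \cite{FNZ}: the locally closed stratification whose pieces have the same $E$-polynomials as the products of symmetric powers, and the identification of the symmetric-power generating series with $PE$, whose odd-cohomology sign bookkeeping you correctly attribute to the Adams-operation structure of $\Psi$. One small simplification: for the first input you do not need to match classes in $K_0(\Var_{\mathbb{C}})$; it suffices that $E$-polynomials are unchanged under bijective morphisms of complex varieties (stratify the target until the bijection restricts to isomorphisms and use additivity), which is the weaker statement FNZ invoke. The formal assembly is sound as written: each Euler factor has constant term $1$ and only finitely many profiles contribute in each $t$-degree, so the rearrangement is legitimate in $\mathbb{Q}[u,v][[t]]$, and $PE\bigl[\sum_d f_d\bigr] = \prod_d PE[f_d]$ follows from additivity of $\Psi$. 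In short, your plan is a correct reconstruction of the published proof of the cited theorem, and it supplies exactly the content that this paper chose to treat as an external citation.
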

			
			\item This part follows from applying the proof of Part 1 to the following result of Getzler \cite{G}:
			
				\begin{thm} (Getzler, Corollary 5.7 on p. 16 of \cite{G}) \\
					Let $F(X, n) \subset \Sym^n X$ be the configuration space of \emph{unordered} distinct points of $X$ and $e(F(X, n), \varepsilon)$ be the equivariant Hodge--Deligne polynomial with respect to the sign representation (p. 3 and p. 8 of \cite{G}). If $e(X) = \sum_{p, q} h^{p, q} u^p v^q$ is the Serre $E$-polynomial of $X$, then \[ \sum_{n = 0}^\infty t^n e(F(X, n), \varepsilon) = \prod_{p, q}^\infty (1 + t u^p v^q)^{h_{p, q}} \] and \[ \sum_{n = 0}^\infty t^n e(F(X, n)/S_n) = \prod_{p, q \ge 0} \left( \frac{1 - t^2 u^p v^q}{1 -t u^p v^q} \right). \] 
				\end{thm}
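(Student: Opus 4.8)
The plan is to establish the two product formulas separately, both resting on the fact that the Serre $E$-polynomial is additive and multiplicative, i.e. factors through the Grothendieck ring $K_0(\mathrm{MHS})$ of mixed Hodge structures together with its $\lambda$-ring structure. Throughout I write $e(X) = \sum_{p,q} h^{p,q} u^p v^q$ and $S(t) := \sum_{n\ge 0} e(\Sym^n X)\, t^n$. Following Getzler I take $F(X,n)$ to be the \emph{ordered} configuration space with its permutation $S_n$-action, so that $F(X,n)/S_n$ is the unordered one, $e(F(X,n)/S_n)$ is its $E$-polynomial, and $e(F(X,n),\varepsilon)$ is the sign-isotypic part of the equivariant $E$-polynomial of Definition \ref{equivser}; write $C(t) := \sum_{n\ge 0} e(F(X,n)/S_n)\, t^n$ (this is the quantity appearing on the left of the second formula, and clarifying this convention is itself a necessary first step, since the surrounding text uses $F(X,n)$ for the unordered space).

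For the second identity I would first invoke the Macdonald--Cheah formula $S(t) = PE[e(X)\, t] = \prod_{p,q}(1 - t u^p v^q)^{-h^{p,q}}$, which expresses the fact that symmetric products realize the $\sigma^n$-operations (the $h_n$-plethysm $h_n \odot e(X)$) of the $\lambda$-ring, already the principle behind the $\sum_n (h_n \odot f)$ rewriting in the proof of Part 1. The new geometric input is a \emph{square-free decomposition}: the assignment $(D_1,D_2) \mapsto D_1 + 2D_2$ defines, for each $n$, a stratification isomorphism $\bigsqcup_{j+2k=n} \bigl(F(X,j)/S_j\bigr) \times \Sym^k X \xrightarrow{\ \sim\ } \Sym^n X$, with strata indexed by the number $j$ of points of odd multiplicity (one recovers $D_1$ as the reduced locus of odd multiplicity and $D_2$ as half the remainder). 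Additivity and multiplicativity of $e$ give the functional equation $S(t) = C(t)\, S(t^2)$, whence $C(t) = S(t)/S(t^2) = \prod_{p,q}\bigl((1 - t^2 u^p v^q)/(1 - t u^p v^q)\bigr)^{h^{p,q}}$, which is the claimed formula (the exponents $h^{p,q}$ being absorbed into the product taken ``over $p,q$'' with multiplicity).

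For the first identity I would show that the sign-isotypic part is insensitive to the diagonals, i.e. $e(F(X,n),\varepsilon) = e(X^n,\varepsilon)$. The $S_n$-equivariant stratification $X^n = \bigsqcup_\pi X_\pi$ by collision pattern (coordinates equal iff the indices lie in a common block of the set partition $\pi$) has $X_\pi \cong \Conf^{|\pi|} X$, and $S_n$ permutes the strata so that each orbit is an induced space $\mathrm{Ind}_{\mathrm{Stab}(\pi)}^{S_n} X_\pi$. By equivariant additivity and Frobenius reciprocity the sign contribution of such an orbit is $\langle e^{\mathrm{Stab}(\pi)}(X_\pi), \varepsilon \rangle$; since the within-block factor $\prod_{B\in\pi} S_B$ acts trivially on $X_\pi$ while $\varepsilon$ restricts nontrivially to any $S_B$ with $|B|\ge 2$, this pairing vanishes unless $\pi$ is the discrete partition. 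Only the open stratum survives, giving $e(F(X,n),\varepsilon) = e(X^n,\varepsilon) = \lambda^n(e(X))$, the $n$-th $\lambda$-operation in $K_0(\mathrm{MHS})$; the standard generating identity $\sum_n \lambda^n(e(X))\, t^n = \prod_{p,q}(1 + t u^p v^q)^{h^{p,q}}$ then yields the formula. As a sanity check, for $X = \mathbb{A}^1$ one finds $e(F(\mathbb{A}^1,2)/S_2,\varepsilon)$-type vanishing for the transposition, matching the absent $t^2$-coefficient on the right.

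The main obstacle I anticipate is not the combinatorics but the consistent bookkeeping of signs together with the equivariant formalism. Because $e$ is an Euler characteristic, the $S_n$-action on $H^*_c(X^n) = H^*_c(X)^{\otimes n}$ carries Koszul signs, so the roles of $\sigma^n$ and $\lambda^n$ are interchanged on odd-weight classes; this super-sign is exactly what produces the $(1 - t^2 u^p v^q)$ in the numerator of the second formula and the $(1 + t u^p v^q)$ of the first, and obtaining the stated formulas on the nose requires tracking it carefully in $K_0(\mathrm{MHS})$. In tandem I must justify the equivariant inputs in the mixed (non-smooth, non-projective) setting: that the equivariant $E$-polynomial is additive over $S_n$-invariant stratifications and behaves correctly under induction, and that Macdonald--Cheah holds for $\Sym^n X$ of an arbitrary quasiprojective $X$. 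These are precisely the points supplied by the mixed Hodge machinery underlying Getzler's original argument, so the effort lies in assembling them cleanly rather than in any single new idea.
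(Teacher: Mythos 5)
This statement is not proved in the paper at all: it is quoted as a black box from Getzler (Corollary 5.7 of \cite{G}) inside the proof of Part 3 of Theorem \ref{hdcolorpleth}, so the honest comparison is with Getzler's original derivation, and your argument is genuinely different from it. Getzler obtains the two product formulas by specializing his general cyclotomic formula for $e_\sigma(F(X,n))$ (the result quoted in the paper as Theorem \ref{equivserconf}), proved via the composition/plethysm formalism for symmetric sequences of mixed Hodge structures, and then averaging over $S_n$ against the sign and trivial characters. You instead give two direct geometric arguments: for the sign formula, the collision-pattern stratification of $X^n$ plus equivariant additivity and Frobenius reciprocity, where the restriction of $\varepsilon$ to a within-block factor $S_B$ with $|B| \ge 2$ kills every orbit except the open stratum, reducing $e(F(X,n), \varepsilon)$ to $e(X^n, \varepsilon) = \lambda^n(e(X))$; and for the unordered formula, Cheah's product formula for $\sum_n e(\Sym^n X) t^n$ combined with the square-free decomposition $D = D_1 + 2D_2$ (the Vakil--Wood stratification of $\Sym^n X$), giving the functional equation $S(t) = C(t) S(t^2)$. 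Both steps are sound: the parity bookkeeping you flag is exactly right, since for an odd class the Koszul sign turns the sign-isotypic piece into $\Sym^k$, whose Euler signs resum to $(1 + t u^p v^q)^{-1}$, so the exponents come out as the Euler-characteristic coefficients $h^{p,q}$ on the nose (a spot check with $X = \C^*$, $n = 2$ gives $e(F(X,2),\varepsilon) = 1 - uv$ both from your product and from a direct trace computation); the only loose phrase is attributing the $(1 - t^2 u^p v^q)$ numerator to the super-sign, when it really comes from the functional equation, the super-sign being what makes Cheah's formula and the $\lambda^n$ identity hold uniformly with exponents $h^{p,q}$. You also correctly repair two transcription slips in the paper's statement: $F(X,n)$ must be the \emph{ordered} configuration space for $F(X,n)/S_n$ to parse, and the second product is missing the exponent $h^{p,q}$ (present in Getzler's original; without it the formula is false already for $X = \mathbb{CP}^1$). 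What each approach buys: Getzler's machinery computes $e_\sigma(F(X,n))$ for every $\sigma$ at once, which the paper needs elsewhere (Proposition \ref{chromhdsym}), while your route isolates exactly these two specializations with minimal equivariant input and makes the second formula visibly motivic; the one point to make fully rigorous is that the cycle-addition map $(D_1, D_2) \mapsto D_1 + 2D_2$ is an injective morphism onto a constructible set, which in characteristic zero is a piecewise isomorphism and hence harmless for $E$-polynomials, as you anticipate.
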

			
			 We apply this result while substituting in $-t$ in place of $t$ in the first line. The second part is a quotient of results in Part 1 with the numerator corresponding to replacing $t$ with $t^2$.
			
		\end{enumerate}

	\end{proof}
	
	The analysis in Theorem \ref{hdcolorpleth} can be repeated using chromatic symmetric polynomials other than $X_{K_n}$. This is based on the fact that there is a finite sequence chromatic symmetric polynomials $X_G$ form a $\mathbb{Q}$-basis of $\Lambda^d$ for each $d$ (Definition \ref{plethdef}).

	\begin{cor} \label{chromext}
		Statements analogous to those in Theorem \ref{hdcolorpleth} hold when we replace the complete graphs $\{K_n\}$ by suitable sequence of connected graphs $\{ G_k \}$ with each $G_k$ containing $k$ vertices. 
	\end{cor}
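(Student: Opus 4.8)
The plan is to combine the chromatic-basis theorem of Cho--van Willigenburg with the plethysm axioms of Definition \ref{plethdef}, so that every place in the proof of Theorem \ref{hdcolorpleth} where the single graph $K_n$ controlled the degree-$n$ term instead gets controlled by products of the $X_{G_k}$. First I would record the precise input from Theorem 5 on p.~4 of \cite{CW}: for \emph{any} sequence of connected graphs $\{G_k\}_{k \ge 1}$ with $|V(G_k)| = k$, the products $X_{G_\lambda} := \prod_i X_{G_{\lambda_i}}$, indexed by partitions $\lambda = (\lambda_1, \lambda_2, \ldots)$, form a $\mathbb{Q}$-basis of the algebra $\Lambda$ of symmetric functions; restricting to $\lambda \vdash n$ gives a basis of $\Lambda^n$. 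The word ``suitable'' in the statement is exactly this \emph{connectedness} hypothesis: it is what forces the change-of-basis matrix relating $\{X_{G_\lambda}\}$ to a standard basis to be triangular, since a connected graph on $k$ vertices contributes a nonzero coefficient to the one-part term indexed by $(k)$.

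Next I would feed this into the identity $PE[f] = \sum_{n \ge 0} (h_n \odot f)$ established at the start of the proof of Theorem \ref{hdcolorpleth}. Because $\{X_{G_\lambda} : \lambda \vdash n\}$ spans $\Lambda^n$, there are rational coefficients $c_\lambda^{(n)}$ with $h_n = \sum_{\lambda \vdash n} c_\lambda^{(n)} X_{G_\lambda}$, and the plethysm axioms $(a+b)\odot f = (a \odot f)+(b \odot f)$ and $(ab)\odot f = (a\odot f)(b\odot f)$ then give
\[
h_n \odot f = \sum_{\lambda \vdash n} c_\lambda^{(n)} \prod_i \bigl( X_{G_{\lambda_i}} \odot f \bigr).
\]
Thus each degree-$n$ contribution to $PE[f]$ becomes a $\mathbb{Q}$-linear combination of products of plethysms $X_{G_k} \odot f$, and applying the Crew--Spirkl decomposition (Theorem \ref{chromplethint}) to each factor reproduces the coloring interpretation, now indexed by acyclic orientations of the $G_k$ (equivalently of the disjoint unions $G_\lambda = \bigsqcup_i G_{\lambda_i}$, using multiplicativity $X_{G \sqcup H} = X_G X_H$ of the chromatic symmetric function) together with signed proper colorings by the variables of $f$.

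With that identity in hand, the three parts of Theorem \ref{hdcolorpleth} follow in parallel: each only manipulates the \emph{outer} rewriting $PE[f] = \sum_n (h_n \odot f)$ before substituting a geometric generating series for $f$ --- the Florentino--Nozad--Zamora series for Part~2, and the Getzler products for Part~3 (with $t \mapsto -t$, or a quotient for the unordered case) --- so replacing $K_n$ by $G_\lambda$ in the coloring interpretation carries over verbatim. I would therefore state and prove the displayed expansion of $h_n \odot f$ once, and then remark that it specializes to each of the three settings exactly as in the original argument.

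The step I expect to be the main obstacle is bookkeeping rather than conceptual. Unlike the clean factor $(-1)^n/n!$ attached to $X_{K_n}$ in the original proof, the coefficients $c_\lambda^{(n)}$ are entries of the inverse of the Cho--van Willigenburg transition matrix, and making them explicit (or even controlling their support) for a general sequence $\{G_k\}$ means inverting that triangular change of basis. Since the qualitative corollary needs only the \emph{existence} of the $c_\lambda^{(n)}$, I would keep the main argument at the level of the basis property plus the two plethysm axioms and relegate any explicit coefficient computation to a separate remark.
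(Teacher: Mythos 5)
Your proposal is correct and takes essentially the same route as the paper's proof: both invoke the Cho--van Willigenburg basis theorem (Theorem \ref{chromgen}) to re-expand $h_n$ as a $\mathbb{Q}$-linear combination of products of the $X_{G_k}$, distribute the plethysm over sums and products, and then apply the Crew--Spirkl decomposition (Theorem \ref{chromplethint}) to recover the coloring interpretation. The only cosmetic difference is that the paper makes the change of basis concrete by routing it through power sums ($h_n = \sum_{\lambda \vdash n} z_\lambda^{-1} p_\lambda$ and $X_{G_\lambda} = \sum_{\mu \le \lambda} c_{\lambda \mu} p_\mu$), remarking that nontrivial weights may then appear in the weighted chromatic symmetric functions, whereas you work abstractly from the basis property and absorb products via $X_{G \sqcup H} = X_G X_H$.
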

	
	\begin{proof}
		In the beginning of the proof of Part 1 of Theorem \ref{hdcolorpleth}, we could have written $h_n \odot g$ using a different $\mathbb{Q}$-basis of chromatic symmetric polynomials using the following result. 
			
			\begin{thm}(Cho--van Willigenburg, Theorem 5 on p. 4 of \cite{CW}) \label{chromgen} \\
				Let $\{ G_k \}$ be a set of connected graphs such that $G_k$ has $k$ vertices for each $k$. Then, $\{ X_{G_\lambda} : \lambda \dashv n \}$ is a $\mathbb{Q}$-basis of $\Lambda^n$. We also have that $\Lambda = \mathbb{Q}[X_{G_1}, X_{G_2}, \ldots]$ and the $X_{G_k}$ are algebraically independent over $\mathbb{Q}$. 
			\end{thm}
			
		We can make this more concrete using products of power sums. Note that a version of Newton's identities relating homogenenous complete symmetric polynomials with power sums states that $n h_n = p_n + h_1 p_{n - 1} + \ldots + h_{n - 1} p_1$ (p. 4 of \cite{G}). To reduce to this case, it suffices to write $h_n$ as a linear combination of chromatic symmetric polynomials based on writing $h_n$ as a $\mathbb{Q}$-linear combination of products of power sums $p_\lambda = p_{\lambda_1} \cdots p_{\lambda_r}$ with $\lambda_1 + \ldots + \lambda_r = n$. Using the version of Newton's identity for homogeneous complete symmetric polynomials, we can write $h_n = \sum_{h \dashv n} z_\lambda^{-1} p_\lambda$, where $z_\lambda = 1^{m_1} m_1! \cdot 2^{m_2} m_2! \cdot \cdots$ and $z_\lambda^{-1}$ is the number of permutations of $n$ objects of fixed cycle type $\lambda = (1^{m_1} 2^{m_2} \cdots )$ divided by $n!$ (p. 298 -- 299 and Proposition 7.7.6 on p. 301 of \cite{Sta2}). The expression in terms of power sums is the main tool used to express the algebra of symmetric polynomials in terms of chromatic symmetric polynomials in \cite{CW}. \\

		As in the setup of the proof of this Theorem \ref{chromgen} on p. 4 of \cite{CW}, let $\{ G_k \}$ be a set of connected graphs such that $G_k$ has $k$ vertices for each $k$. Writing $X_{G_\lambda} = \sum_{\mu \le \lambda} c_{\lambda \mu} p_\lambda$, we can use a change of variables to write each term of the sum $PE[f]$ as a $\mathbb{Q}$-linear combination of plethysms of the form $X_G \odot f$, which have a decomposition analogous to the one listed in Theorem \ref{hdcolorpleth}. The main difference would be that the weightings in the weighted graph chromatic symmetric polynomial may be nontrivial. 
		
	\end{proof}
	
	\section{Uniqueness and connections with the construction problem for Hodge numbers} \label{uniquegeomsect}
	
	Given the computations above, it is natural to ask whether they are reversible in some sense. Since the plethystic exponential actually induces a \emph{group isomorphism} from the (additive group of) of formal power series with 0 as constant term to (multipliticative group of) formal power series with 1 as constant term (Remark 2.2 on p. 26 of \cite{F}), this is indeed the case if the Hodge numbers are equal. \\
	
	This means that the answer essentially depends on the construction problem for Hodge numbers. While multiplication of Hodge--Deligne polynomials doesn't literally yield a homomorphism on the plethystic exponential side, properties of symmetric functions still allow us to show that generators of Hodge--Deligne polynomials as an algebra can carry over to power series polynomially generating the plethystic exponential side. In particular, this reduces the number of colors required to be at most 4. These relations are specified in the statement of Theorem \ref{uniquegeom}, which is proved below.

	\begin{proof} (Proof of Theorem \ref{uniquegeom}) \\
		The proof will focus on Part 1 since Part 2 follows from Part 1 after applying Corollary \ref{chromext}. \\
		Given a variety $X$, let $HD(X)$ be its Hodge--Deligne polynomial. Moving to \emph{graded} formal Hodge--Deligne polynomials $HD(X)z^n$ for $X$ ($\dim X = n$) compatible with the Hodge symmetry relation $h^{p, q} = h^{q, p}$ and Serre duality relation $h^{p, q} = h^{n - p, n - q}$, we can apply construction-related results of Kotschick--Schreieder \cite{KS} and Paulsen--Schreieder \cite{PaS}. The setup in the proof of Theorem \ref{hdcolorpleth} in Section \ref{plethgen} still applies since \[ \sum_{n \ge 0} h_n t^n = \exp \left( \sum_{i \ge 1} \frac{p_i}{i} t^i \right), \] which implies that  \[ \sum_{n \ge 0} (h_n \odot g) t^n = \exp \left( \sum_{i \ge 1} \frac{p_i \odot g}{i} t^i \right), \]  where $a \odot b$ denotes the plethysm operation between $a$ and $b$. \\
		
		Note that the former implies the latter since \[ e^x = 1 + x + \frac{x^2}{2!} + \ldots \] and $(fg) \odot h = (f \odot h) (g \odot h)$. Since $(f + g) \odot h = f \odot h + g \odot h$ and $c \odot f = c$ for any constant $c$, we have that
		
		\begin{align*}
			e^x \odot g &= 1 \odot g + x \odot g + \frac{x^2}{2!} \odot g + \ldots \\
			&= 1 + (x \odot g) + \frac{(x \odot g)^2}{2!} + \ldots \\
			&= e^{x \odot g}.
		\end{align*}

		Combining Corollary 3 on p. 646 of \cite{KS} and Theorem \ref{hdcolorpleth}, the plethystic exponentials coming from Hodge--Deligne polynomials are generated by products of colorings of $K_n$ with an acyclic orientation by variables used to build $HD(X) = 1 + u + v + uv$ for an elliptic curve $X$, $HD(\mathbb{CP}^1) = 1 + uv$, and $HD(\mathbb{CP}^2) = 1 + uv + u^2 v^2$ and there are no nontirival multiplicative relations between these products. The situation is simplified when we quotient out by differences of birational smooth complex projective varieties since we only need the generators $X$ and $\mathbb{CP}^1$ (Theorem 7 on p. 647 and Corollary 3 on p. 646 of \cite{KS}). \\

		In order to incorporate the multiplicative structure of these rings purely using the original generators themselves without taking products, we would need to make use of formulas involving Schur functions for the plethysm operation $h_n \odot (fg)$ (Part 3 on p. 137 of \cite{Mac}) between a complete symmetric polynomial of degree $n$ and a product of symmetric functions. While the coloring variables reduce to the building blocks $\mathbb{CP}^1$, $\mathbb{CP}^2$, and an elliptic curve $X$, the graphs involved would vary depending on the Schur functions and their expression as a linear combination of chromatic symmetric polynomials (Theorem \ref{chromgen}) instead of simply looking at complete graphs $K_n$. \\
		
		Combining the identity \[ \sum_{n \ge 0} (h_n \odot g) t^n = \exp \left( \sum_{i \ge 1} \frac{p_i \odot g}{i} t^i \right) \] with the identities \[ h_n \odot (fg) = \sum_{|\lambda| = n} (s_\lambda \odot f) (s_\lambda \odot g) \] and \[ h_n \odot (f + g) = \sum_{k = 0}^n (h_k \odot f) (h_{n - k} \odot g) \] (equation (8.8) on p. 136, p. 177 of \cite{LR}, Part 3 on p. 137 of \cite{Mac}), we find that the plethysm operations $h_n \odot f$ with $f = HD(X)= (1 + u + v + uv)$ for an elliptic curve $X$, $f = HD(\mathbb{CP}^1) = (1 + uv) $, and $f = HD(\mathbb{CP}^2) = 1 + uv + u^2 v^2$ still generate everything naturally using multiplication and addition (i.e. as generators of an algebra) after using the plethystic exponential. Note that this still works for multiplication since we can write $s_\lambda$ as a polynomial in $h_r$ by the Jacobson--Trudi formula $s_\lambda = \sum_{\sigma \in S_N} \varepsilon(\sigma) h_{\lambda + \omega(\sigma)}$ with $\omega(\sigma)_j = \omega(j) - j$ and $\varepsilon(\sigma)$ equal to the sign of the permutation $\sigma$ (Theorem 7.16.1 on p. 342 of \cite{Sta2}) and we can use the identities $(f + g) \odot h = f \odot h + g \odot h$ and $(fg) \odot h = (f \odot h)(g \odot h)$.\\
		
		If we chose to use a different basis of generators from $X_{K_n}$ using chromatic symmetric polynomials $X_{G_k}$ from a collection of connected graphs $\{ G_k \}$ with each $G_k$ containing $k$ vertices, we could have used the decomposition of $s_\lambda$ as a polynomial in the $X_{G_k}$ (Theorem \ref{chromgen}) and proceeded as previously to show that everything is still generated as a polynomial by colorings induced by $HD(\mathbb{CP}^1)$, $HD(\mathbb{CP}^2)$, and $HD(X)$ for an elliptic curve $X$. \\
		
	\end{proof}

	 In the particular case of the generators $X, \mathbb{CP}^1$, and $\mathbb{CP}^2$, we can give a more concrete explanation of why the chromatic symmetric polynomial decomposition in Theorem \ref{hdcolorpleth} is consistent with the simplification  \[ PE \left(  \left( \sum_{p, q \ge 0} a_{p, q} u^p v^q \right) y \right) = \prod_{p, q \ge 0} (1 - u^p v^q y)^{-a_{p, q}} \] used in sources such as Lemma 4.7 on p. 11 of \cite{FNZ}. Note that there is some simplification coming from the fact that the coefficient of each nonzero term is equal to $1$ (no repeats in variables counted with multiplicity and all the same sign). 
	 
	 \begin{exmp} \label{gencolorcomp} ~\\
	 	\vspace{-3mm} 
	 	
	 	For the generators given by $\mathbb{CP}^1$, $\mathbb{CP}^2$, and an elliptic curve $X$, we can check that the decomposition given in Theorem \ref{hdcolorpleth} is consistent with the identity \[ PE \left(  \left( \sum_{p, q \ge 0} a_{p, q} u^p v^q \right) y \right) = \prod_{p, q \ge 0} (1 - u^p v^q y)^{-a_{p, q}}. \] 
	 	
	 	\begin{enumerate}
	 		\item The identity above implies that $PE[HD(\mathbb{CP}^1) y] = PE[(1 + uv) y] = (1 - y)^{-1} (1 - uv y)^{-1} = (1 + y + y^2 + \ldots) (1 + uvy + (uvy)^2 + \ldots) $. The terms of degree $k$ are given by $\sum_{i + j = k} y^i (uv y)^j$. This means that the coefficient of $y^k$ is $\sum_{0 \le j \le k} (uv)^j$. Let $g(u, v) = HD(\mathbb{CP}^1) = 1 + uv$.  Fix an ordering $1 < uv$ and suppose that $1$ and $uv$ both have a positive sign. Using the proof of Theorem \ref{hdcolorpleth}, we find that \[ PE[HD(\mathbb{CP}^1) y]  = \sum_{n \ge 0} \frac{(-1)^n}{n!}  \left( \sum_{\gamma_n, \kappa_{-g}} \prod_{v \in V(K_n)} \sgn(\kappa_{-g}(v)) \kappa_{-g}(v) \right) y^n, \] where $\gamma_n$ corresponds to acyclic orientations of the complete graph $K_n$ and $\kappa_{-g}$ denotes colorings of the vertices of $K_n$ such that $\kappa_{-g}(u) \le \kappa_{-g}(v)$ is $u \xrightarrow{\gamma_n} v$. Note that the sign condition mentioned in Theorem \ref{chromplethint} does not apply since every single variable in $-g$ has a negative sign. \\
	 		
	 		Since an acyclic orientation of $K_n$ amounts to a total ordering on the vertices of $K_n$, there are $n!$ acyclic orientations. Also, note that the $\sgn(\kappa_{-g}(v))$ term induces multiplication by $(-1)^n$ since every single vertex is labeled by a variable with a negative sign. Given a particular coloring compatible with an acylic orientation, the product inside is equal to $(uv)^r$, where $r$ is the total number of vertices labeled by $-uv$. Each $r$ only occurs once since the ordering condition implies that $uv$ labels come in a string starting from the $n^{\text{th}}$ (largest) vertex and moving backwards until we stop at some point and label the rest of the vertices by $-1$. This induces a sum $1 + uv + (uv)^2 + \ldots + (uv)^r$, which is consistent with the computation above. \\
	 		
	 		For the other two cases with $\mathbb{CP}^2$ and  the elliptic curve $X$ will use a more combinatorial proof. \\
	 		
	 		\item In the case of $HD(\mathbb{CP}^2) = 1 + uv + u^2 v^2$, we can repeat the above with the same idea of partitioning $n$ vertices into blocks according to whether they are labeled by $-1$, $-uv$, or $-u^2 v^2$. Next, we expand $(1 - a)^{-1}  (1 - uv a)^{-1} (1 - u^2 v^2 a)^{-1} = (1 + a + a^2 + \ldots) (1 + (uv a) + (uva)^2 + \ldots) (1 + (u^2 v^2 a) + (u^2 v^2 a)^2 + \ldots)$ and substituting in $a = y^2$. Collecting terms of degree $2r$ in $y$, we find the term indicated in the acyclic coloring interpretation of Theorem \ref{hdcolorpleth}.  \\
	 		
	 		\item Given an elliptic curve $X$, the same combinatorial idea of partitions and collecting terms in power series expansions as in Part 2 applies for $HD(X) = 1 + u + v + uv$. \\
	 	\end{enumerate}
	 \end{exmp}
	 
	 Here is some more explicit combinatorial information on how the dependencies on the generators $\mathbb{CP}^1$, $\mathbb{CP}^2$, and the elliptic curve $X$ carry over to the plethystic exponential. \\
	 
	 \begin{prop} \label{plethpolycoeff} ~\\
	 	\begin{enumerate}
	 		\item For any $r \ge 1$, the degree $rn$ term (with respect to the auxiliary variable $z$) of the plethystic exponential of a graded Hodge--Deligne polynomial $HD(X)(u, v) z^n$ of an $n$-dimensional variety $X$ can be decomposed as products of complete homogeneous polynomials with terms of the form $u^p v^q, u^q v^p, u^{n - p} v^{n - q}$ substituted in (only taking the first two in the case $p + q = n$) where the degrees depend on partitions of $n$ into $h^{p, q}$ parts. Note that each term of the plethystic exponential of $HD(X)(u, v) z^n$ has a degree in $z$ which is a multiple of $n$. \\
	 		
	 		\item The coloring polynomials (in the context of Theorem \ref{hdcolorpleth}) of Hodge--Deligne polynomials of the varieties $\mathbb{CP}^1, \mathbb{CP}^2$, and an elliptic curve $X$ generate the plethystic exponential of the Hodge--Deligne polynomial of a compact K\"ahler variety via nested determinants with ($\mathbb{Q}$-linear combinations of) coloring polynomials of complete graphs (or the graphs $G_k$ determining the chosen chromatic symmetric function basis). The determinants which we substitute into are determined by heights of rim hooks (p. 61 of \cite{ER}). \\
	 		
	 		\item Suppose we consider graded Hodge--Deligne polynomials of birationally equivalent varieties to be equivalent to each other. Let $h_n$ be the complete homogeneous symmetric polynomial of degree $n$ and $p_a$ be the power sum of degree $a$. If $n \ge 2$, the degree $n$ part (in the auxiliary variable) of a plethystic exponential is a $\mathbb{Z}$-linear combinations products of terms of the form $h_n \odot p_a$ with the number of terms in the product depending on a partition of $n$ into parts with the number of parts depending on the Hodge numbers as in Part 1. These generators depend on the difference of the Hodge--Deligne polynomials of the elliptic curve and $\mathbb{CP}^1$ and the Hodge--Delinge polynomial of $\mathbb{CP}^1$ itself. \\
	 	\end{enumerate}
	 \end{prop}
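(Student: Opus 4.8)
The plan is to derive all three parts from the single identity $PE[f] = \sum_{N \ge 0}(h_N \odot f)$ established in the proof of Theorem \ref{hdcolorpleth}, together with the plethysm identities
\[
h_N \odot (f + g) = \sum_{k=0}^N (h_k \odot f)(h_{N-k}\odot g), \qquad h_N \odot (fg) = \sum_{|\lambda| = N}(s_\lambda \odot f)(s_\lambda \odot g)
\]
recalled in the proof of Theorem \ref{uniquegeom}, the Newton identity $h_N = \sum_{\lambda \vdash N} z_\lambda^{-1} p_\lambda$, and the Jacobi--Trudi formula $s_\lambda = \det(h_{\lambda_i - i + j})$. For Part 1, I would first record that $f = HD(X)(u,v)z^n$ is homogeneous of degree $n$ in $z$; since $p_m \odot f$ replaces $z^n$ by $z^{mn}$ and $p_\lambda \odot f = \prod_i (p_{\lambda_i}\odot f)$, the summand $h_N \odot f$ is homogeneous of degree $nN$ in $z$, the graded pieces for distinct $N$ are disjoint, and so the degree-$rn$ term of $PE[f]$ is exactly $h_r \odot f$. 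This yields the last sentence of Part 1. I would then group the monomials of $HD(X)$ into the orbits $\{u^pv^q, u^qv^p, u^{n-p}v^{n-q},\dots\}$, which by $h^{p,q}=h^{q,p}=h^{n-p,n-q}$ all carry the common multiplicity $h^{p,q}$ and collapse to $\{u^pv^q,u^qv^p\}$ precisely when $p+q=n$. Applying the addition formula across orbits writes $h_r \odot f$ as a sum over $\sum_i r_i = r$ of products $\prod_i (h_{r_i}\odot f_i)$ with $f_i$ the orbit sums; reading each coefficient $h^{p,q}$ as a multiplicity of equal colours, each factor $h_{r_i}\odot f_i$ is the complete homogeneous polynomial of degree $r_i$ in the orbit monomials, whose expansion is indexed by partitions of $r_i$ with number of parts bounded by $h^{p,q}$.

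For Part 2, I would start from the Kotschick--Schreieder presentation (Corollary 3 on p.\ 646 of \cite{KS}) writing the graded Hodge--Deligne polynomial of a compact K\"ahler variety as a polynomial in $HD(\mathbb{CP}^1)$, $HD(\mathbb{CP}^2)$ and $HD(X)$ for an elliptic curve $X$. Expanding $h_n \odot(\text{this polynomial})$ by the addition and multiplication formulas reduces every contribution to plethysms $s_\lambda \odot HD(G)$ for one of the three generators $G$. I would then substitute the Jacobi--Trudi determinant for $s_\lambda$ and re-express each entry $h_k$ through $h_k \odot g = \tfrac{(-1)^k}{k!}\, X_{K_k}\odot(-g)$ (or $X_{G_k}$ after Corollary \ref{chromext}); because $(f+g)\odot h = f\odot h + g\odot h$ and $(fg)\odot h = (f\odot h)(g\odot h)$ make substitution into the second slot of $\odot$ legitimate, the Jacobi--Trudi determinant turns into the claimed nested determinant with coloring-polynomial entries. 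The rim-hook-height signs (p.\ 61 of \cite{ER}) enter exactly when the multiplication formula is converted to power sums by the Murnaghan--Nakayama rule, which records each $s_\lambda$-coefficient as a signed count weighted by $(-1)^{\text{ht}}$.

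For Part 3, I would pass to birational equivalence classes, where Theorem 7 on p.\ 647 and Corollary 3 on p.\ 646 of \cite{KS} reduce the generators to $HD(\mathbb{CP}^1)=1+uv$ and the difference $HD(X)-HD(\mathbb{CP}^1)=u+v$ for an elliptic curve $X$. Expanding each $h_r$ by Newton's identity and using $p_\lambda \odot p_a = p_{a\lambda}$ together with $(fg)\odot h=(f\odot h)(g\odot h)$ collects the degree-$n$ part into products of terms $h_r \odot p_a$, the number and degrees of the factors being governed by the same partition-of-$n$ data as in Part 1. The coefficients are a priori rational because of the $z_\lambda^{-1}$, so I would close the argument by verifying integrality: the coloring interpretation of Theorem \ref{hdcolorpleth} exhibits $PE$ with integer coefficients, which forces the rational coefficients to clear to $\mathbb{Z}$. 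The hypothesis $n \ge 2$ is what guarantees a genuine product, i.e.\ a partition of $n$ into at least two parts.

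The step I expect to be the main obstacle is Part 2: making the nested-determinant statement precise requires simultaneously controlling the Jacobi--Trudi determinant, the Murnaghan--Nakayama rim-hook signs, and the substitution of coloring polynomials into the plethysm, and checking that these operations are mutually compatible, in particular that substituting into the second argument of $\odot$ commutes with forming the determinant. The integrality assertion in Part 3 is a secondary difficulty, since the intermediate $z_\lambda^{-1}$ factors and Jacobi--Trudi signs must be shown to cancel against one another.
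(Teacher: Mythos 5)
Your Parts 1 and 2 track the paper's own proof quite closely: the same identity $PE[f]=\sum_{N}(h_N\odot f)$, the same grouping of monomials into Serre-duality orbits $R_{p,q,n}=(u^pv^q+u^qv^p+u^{n-p}v^{n-q}+u^{n-q}v^{n-p})z^n$ carrying multiplicity $h^{p,q}$, the same use of the addition formula to produce the partition structure, and for Part 2 the same skeleton (Kotschick--Schreieder generators, multiplication formula with Schur functions, Jacobi--Trudi, recursion on powers of the generators to get nested determinants). Your explicit justification of the grading claim (that $h_N\odot f$ is homogeneous of degree $nN$ in $z$, so the degree-$rn$ term is exactly $h_r\odot f$) is a useful addition the paper only asserts. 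One divergence: the paper obtains the rim-hook heights from the E\u{g}ecio\u{g}lu--Remmel special rim hook interpretation of the first-row expansion of the Jacobi--Trudi determinant (the inverse Kostka matrix), not from the Murnaghan--Nakayama rule; your route would need to be reconciled with the determinant structure you are simultaneously building, since the statement cites the former mechanism.

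The genuine gap is in Part 3, in two places. First, you never derive where the power sums $p_a$ come from. In the paper, setting $C=(uv)z^2=(HD(\mathbb{CP}^1)^2-HD(\mathbb{CP}^2))z^2$ to zero (Theorem 7 of \cite{KS}) and using the factorization $R_{p,q,n}=A_{p-q}C^{\min(q,n-p)}T_{|n-p-q|}$ shows that only the orbits with $q=0$ or $p=n$ survive, and for $n\ge 2$ the surviving generator content is $(p_n+2)z^n$ with $p_n=u^n+v^n$ (equivalently $p_n\sim B^n$ in the quotient): the power sum is the actual \emph{second} argument of the plethysm, produced by the birational reduction. Your proposal instead assumes the second argument is already a power sum (your appeal to $p_\lambda\odot p_a=p_{a\lambda}$ presupposes this) without establishing it. Second, your integrality argument does not work as stated: knowing that $PE[f]$ has integer coefficients as a power series in $u,v,z$ does not force integer coefficients in an expansion over a particular spanning set of products of terms $h_r\odot p_a$ --- an expression such as $\frac{1}{2}f_1+\frac{1}{2}f_2$ can have integer coefficients without either coefficient being an integer. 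The paper never introduces the denominators $z_\lambda^{-1}$ at this stage at all; it uses the addition formula $h_n\odot(p_n+2)=\sum_{a=0}^n (h_a\odot p_n)(h_{n-a}\odot 2)$ together with $h_r\odot 2=r+1$, so the coefficients $(n-a+1)$ are manifestly integers. Replacing your Newton-identity expansion by this computation both supplies the missing source of the $p_a$ and removes the integrality issue.
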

	 	
	 \begin{proof} ~\\
	 	\vspace{-5mm}
	 	\begin{enumerate}
	 		\item We can give some more detailed information on individual components or coefficients depending on the decomposition of the Hodge--Deligne polynomials that are chosen (e.g. generators as an algebra or an abelian group under addition). In the vector space case, we use the fact that the graded ring of formal Hodge--Deligne polynomials under the Hodge symmetry relation $h^{p, q} = h^{q, p}$ and the Serre duality relation $h^{p, q} = h^{n - p, n - q}$ consists of $\mathbb{Q}$-linear combinations (with nonnegative coefficients) of polynomials of the form $R_{p, q, n} := (u^p v^q + u^q v^p + u^{n - p} v^{n - q} + u^{n - q} v^{n - p} ) z^n$ for some $0 \le q \le p \le n$ such that $p + q \le n$. Let $S_{p, q}$ be the polynomial in $u$ and $v$ such that $S_{p, q} z^n = R_{p, q, n}$. 
	 		
	 		Recall that $PE[f] = \sum_{n \ge 0} (h_n \odot f)$. Since the polynomial $R_{p, q, n}$ is a sum of monomials,  a standard simplification (Theorem 7 on p. 171 of \cite{LR}) implies that \[ h_r \odot R_{p, q} = h_r(u^p v^q z^n, u^q v^p z^n, u^{n - p} v^{n - q} z^n, u^{n - q} v^{n - p} z^n) = h_r(u^p v^q, u^q v^p, u^{n - p} v^{n - q}, u^{n - q} v^{n - p}) z^{rn}. \] In the expression $h_r \odot R_{p, q, n} = h_r(u^p v^q, u^q v^p, u^{n - p} v^{n - q}, u^{n - q} v^{n - p}) z^{rn}$, we treat $h_r$ as the complete symmetric polynomial of degree $r$ in $4$ variables. The decomposition \[ h_r \odot (f + g) = \sum_{k = 0}^r (h_k \odot f) (h_{r - k} \odot g) \] implies that \[ h_r \odot (mF) = \sum_{i_1 + \ldots + i_m = r} (h_{i_1} \odot F) \cdots (h_{i_m} \odot F) \] and the generators $S_{p, q}$ are involved by taking $m = h^{p, q}$ as we vary over $0 \le q \le p \le n$ such that $p + q \le n$. This means substituting in $u^p v^q, u^q v^p, u^{n - p} v^{n - q},$ and $u^{n - q} v^{n - p}$ into $4$-variable homogeneous symmetric polynomials whose degrees depend on the partitions of $n$ into $h^{p, q}$ parts. \\
	 		
	 		Decomposing any formal Hodge--Deligne polynomial $F$ into a sum of the form $\sum_{\substack{0 \le q \le p \le n \\ p + q \le n}} h^{p, q} R_{p, q, n}$, the general $m$-term decomposition means that the degree $n$ parts can be written as \[ h_n \odot (F_1 + \ldots + F_m) = \sum_{i_1 + \ldots + i_m = n} (h_{i_1} \odot F_1) \cdots (h_{i_m} \odot F_m) \] taking $M = \{ (p, q) : 0 \le q \le p \le n, p + q \le n \}$ and $m = |M|$. Indexing each element of $M$ as $(p_r, q_r)$ for some $1 \le r \le m$, the coefficient of $z^{rn}$ in the plethystic exponential of the graded formal Hodge--Deligne polynomial is \[ h_n \odot F = \sum_{i_1 + \ldots + i_m = n} \prod_{r = 1}^m (h_{i_r} \odot S_{p_r, q_r}) =  \sum_{i_1 + \ldots + i_m = n} \prod_{r = 1}^m h_{i_r}(u^{p_{i_r} } v^{q_{i_r}}, u^{q_{i_r}} v^{p_{i_r}}, u^{n - p_{i_r}} v^{n - q_{i_r}}, u^{n - q_{i_r}} v^{n - p_{i_r}} ), \] where $m = \#  \{ (p, q) : 0 \le q \le p \le n, p + q \le n \}$. Using the analysis in Theorem \ref{hdcolorpleth}, each term of the product can be using colorings of acyclic orientations of $K_n$ by the ``variables $u^{p_{i_r}} v^{q_{i_r}}, u^{q_{i_r}} v^{p^{i_r}}, u^{n - p_{i_r}},$ and $v^{n - q_{i_r}}$. \\

	 		\item 
	 		
	 		As in Theorem 6 on p. 645 of \cite{KS} describing the structure of the formal Hodge--Deligne polynomials, we take $f$ to be a polynomial in variables $A$, $B$, and $C$ with $A = (1 + uv)z = HD(\mathbb{CP}^1)z$, $B = (u + v)z = (HD(X) - HD(\mathbb{CP}^1))z$, and $C = (uv)z^2 = (HD(\mathbb{CP}^1)^2 - HD(\mathbb{CP}^2))z^2$. The decomposition \[ h_n \odot (f + g) = \sum_{k = 0}^n (h_k \odot f) (h_{n - k} \odot g) \] reduces the computation to monomials $A^\alpha B^\beta C^\gamma$. Note that the value $m$ in the $m$-term generalization of this would be the coefficient of the monomial $A^\alpha B^\beta C^\gamma$ in the original Hodge--Deligne polynomial. \\
	 		
	 		These coefficients depend on the expression of $R_{p, q, n} = (u^p v^q + u^q v^p + u^{n - p} v^{n - q} + u^{n - q} v^{n - p} ) z^n$ as polynomials in $A, B, C$. In order to do this, we consider two recursions involving $A_m := (u^m + v^m) z^m$ and $T_r := (1 + u^r v^r) z^r$. This suffices since $R_{p, q, n} = (u^{p - q} + v^{p - q})(u^q v^q + u^{n - p} v^{n - p}) z^n = A_{p - q} C^{\min(q, n - p)} T_{|n - p - q|}$ for each $0 \le q \le p \le n$ such that $p + q \le n$. Since $A_1 = B, A_2 = B^2 - 2C$, and $A_s = BA_{s - 1} - C A_{s - 2}$, the single variable generating function associated to this sequence is $A(w) = \frac{Bw - 2}{Bw - Cw^2 - 1}$. Note that each $A_s$ is a polynomial in $B$ and $C$. Similarly, we have that $T_1 = A, T_2 = A^2 - 2C$, and $T_r = AT_{r - 1} - C T_{r - 2}$, which implies that $T(w) = \frac{Aw - 2}{Aw - Cw^2 - 1}$. The term $T_r$ is the coefficient of $w^r$ in the expansion of this rational function as a power series in $w$. In this case, the $T_r$ are each polynomials in $A$ and $C$. Putting these together, we obtain an expression for $R_{p, q, n}$ as a polynomial in $A, B, C$. To find the coefficient of a given monomial $A^\alpha B^\beta C^\gamma$ in a Hodge--Deligne polynmial, we take the sum over pairs $(p, q)$ with $0 \le q \le p \le n$ and $p + q \le n$ of the cofficient of $A^\alpha B^\beta C^\gamma$ in $R_{p, q, n}$ multiplied by $h^{p, q}$. \\  
	 		
	 		Next, we will focus on the individual monomials and use the multiplicative decomposition \[ h_n \odot (fg) = \sum_{|\lambda| = n} (s_\lambda \odot f) (s_\lambda \odot g) \] and Jacobi--Trudi identity $s_\lambda = \det(h_{\lambda_j + i - j})_{1 \le i, j \le n} = \sum_{\sigma \in S_n} \varepsilon(\sigma) h_{\lambda + \omega(\sigma)}$, where $\omega(\sigma)_j = \omega(j) - j$ and $\varepsilon(\sigma)$ is the sign of the permutation $\sigma$. Expanding with respect to the first row of this matrix, the $j^{\text{th}}$ term has an interpretation as the height of a special rim hook (proof of Theorem 1 on p. 64 -- 65 of \cite{ER}) This will not use the fact that we will eventually take $A, B,$ and $C$ to be sums of distinct monomials (which would reduce to the method used above) since we want to express things in terms of labeled coloring polynomials depending on $A, B,$ and $C$. Since $(fg) \odot h = (f \odot h) (g \odot h)$ and $(f + g) \odot h = (f \odot h) + (g \odot h)$, we have that $s_\lambda \odot F =  \det(h_{\lambda_j + i - j} \odot F)_{1 \le i, j \le n} = \sum_{\sigma \in S_n} \varepsilon(\sigma)( h_{\lambda + \omega(\sigma)} \odot F)$. For powers of individual variables such as $A^\alpha$, an induction argument implies that this is again generated by elements of the form $h_i \odot A$ (which can themselves be interpreted using graph colorings of acyclic directed (complete) graphs depending on the choice of chromatic symmetric polynomials by Theorem \ref{hdcolorpleth} and Corollary \ref{chromext}). Repeating the determinant construction shows that the conclusion follows from taking nested determinants (i.e. the determinant of a matrix whose entries are themselves determinants). \\

	 		The framework above is simplified we only consider polynomials up to birational equivalence of the varieties they arise from. Ae in the proof of Part 2, we will take $A_m := (u^m + v^m) z^m$ and $T_r := (1 + u^r v^r) z^r$. By Theorem 7 on p. 647 of \cite{KS}, we can set $C = 0$ in the original ring of formal Hodge--Deligne polynomials. Since $R_{p, q, n} = (u^{p - q} + v^{p - q})(u^q v^q + u^{n - p} v^{n - p}) z^n = A_{p - q} C^{\min(q, n - p)} T_{|n - p - q|}$ for each $0 \le q \le p \le n$ such that $p + q \le n$, this means that we only need to consider cases where $q = 0$ or $p = n$.  For $n \ge 2$, this essentially means that we are only allowed to have terms of the form $R_{n, 0, n} = (u^n + v^n + 2)z^n = (p_n + 2) z^n$, where $p_n = u^n + v^n$ is the $n^{\text{th}}$ power sum. The $n = 0$ case $R_{0, 0, 0} = 1$ and the $n = 1$ cases $R_{1, 0, 1} = 2(u + v)z = 2B$ and $R_{0, 0, 1} = 2(1 + uv) = 2A$ still remain the same. Since $h_n \odot (p_n + 2) = \sum_{a = 0}^n (h_a \odot p_n) (h_{n - a} \odot 2)$ and $h_r \odot 2 = r + 1$ (either directly or using the monomial simplification in Theorem 7 on p. 171 of \cite{LR}), we have that $h_n \odot (p_n + 2) = \sum_{a = 0}^n (n - a + 1) (h_a \odot p_n)$. Note that $p_n \sim B^n$ in the equivalence that we are considering. 
	 		
	 	\end{enumerate}
	 \end{proof}

	\section{Chromatic symmetric polynomials and symmetries of Hodge--Deligne polynomials} \label{chromsd}

	The chromatic symmetric polynomials used in Section \ref{plethgen} can also be used to analyze symmetries of the Hodge--Deligne polynomials themselves. This section focuses on their relation to the Serre duality-induced symmetry $h^{p, q} = h^{n - p, n - q}$ and equivariant Hodge--Deligne polynomials of configuration spaces of points. 
	
	\begin{prop} \label{chromhdsym} ~\\
		\vspace{-3mm}
		
		\begin{enumerate}
			\item The Serre duality-induced relation $h^{p, q} = h^{n - p, n - q}$ can be expressed in terms of algebraic relations of chromatic symmetric polynomials. 
			
			\item Given a quasiprojective variety $X$, let $F(X, n)$ be the ordered configuration space of $n$ distinct points on $X$. The equivariant Serre polynomial $\overline{e}_\sigma(F(X, n))$ (Definition \ref{equivser}) can be expressed as power series in chromatic symmetric polynomials. More specifically, it can be written as an infinite product where the $j^{\text{th}}$ term is of degree $j$ in these chromatic symmetric polynomials. 
		\end{enumerate}
	\end{prop}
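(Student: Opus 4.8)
The plan is to run both parts through a single engine: the invertible power-sum expansion of chromatic symmetric polynomials from the proof of Corollary \ref{chromext}, namely $X_{G_\lambda} = \sum_{\mu \le \lambda} c_{\lambda\mu}\,p_\mu$ over the basis of Theorem \ref{chromgen}, combined with the specialization of symmetric functions at the ``color alphabet'' $\{u^p v^q\}$ of variables used to build $e(X)$, under which $p_k \mapsto \sum_{p,q} h^{p,q}(u^p v^q)^k = e(X)(u^k, v^k)$. Because every chromatic symmetric polynomial is a fixed $\mathbb{Q}$-linear combination of the $p_\mu$ and the change of basis is invertible, any identity phrased in power sums at this alphabet transcribes verbatim into an identity among the specialized $X_{G_\lambda}$, and conversely.

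For Part 1, I would first rewrite Serre duality $h^{p,q} = h^{n-p,n-q}$ as the single functional equation $e(X)(u,v) = (uv)^n\, e(X)(u^{-1}, v^{-1})$, working throughout with the graded Hodge--Deligne polynomial $HD(X)z^n$ of Section \ref{uniquegeomsect} so that the dimension $n$ is recorded. Specializing at the color alphabet, this equation says precisely that for every $k$ the element $p_k$ maps to the same value whether we use the alphabet $\{u^p v^q\}$ or its image $\{u^{n-p}v^{n-q}\}$ under $u^p v^q \mapsto (uv)^n (u^p v^q)^{-1}$. Equivalently, writing $\phi_{+}$ and $\phi_{-}$ for the two algebra specializations associated with these multisets of colors, Serre duality is exactly the assertion $\phi_{+} = \phi_{-}$; applying both sides to the basis $\{X_{G_\lambda}\}$ then yields the family of algebraic relations $\phi_{+}(X_{G_\lambda}) = \phi_{-}(X_{G_\lambda})$, which is the promised expression of the duality symmetry in chromatic terms.

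For Part 2, the plan is to package the $S_n$-equivariant data into a single symmetric-function power series. Using the Frobenius characteristic $\mathsf{ch}_n$, I would form $\Phi = \sum_{n\ge 0}\mathsf{ch}_n\!\big(\textstyle\sum_i (-1)^i H_c^i(F(X,n))\big)$, so that $\overline{e}_\sigma(F(X,n)) = z_\lambda\,[p_\lambda]\,\Phi$ when $\sigma$ has cycle type $\lambda$, since $\mathsf{ch}_n(V) = \sum_\lambda z_\lambda^{-1}\,\mathrm{Tr}(\sigma_\lambda\mid V)\,p_\lambda$ and the Hodge grading only decorates coefficients with $u^p v^q$. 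The series $\Phi$ is the master object whose sign-representation reading is Getzler's first product formula from Corollary 5.7 (stated in the proof of Part 3 of Theorem \ref{hdcolorpleth}); I would invoke the plethystic-exponential structure of $\Phi$ from \cite{G}, noting that expanding $\log\prod_{p,q}(1+tu^pv^q)^{h_{p,q}} = \sum_{j\ge 1}\tfrac{(-1)^{j-1}}{j}t^j\,(p_j\odot e(X))$ exhibits the grading by the power-sum index $j$ that factors the generating function into an infinite product whose $j$th factor is homogeneous of degree $j$ in power sums. Applying the inverse change of basis expressing each $p_\mu$ through the $X_{G_\lambda}$ of Theorem \ref{chromgen} factor by factor then rewrites each degree-$j$ piece as a polynomial in chromatic symmetric polynomials of connected graphs on $j$ vertices, and reading off the $p_\lambda$-coefficient delivers $\overline{e}_\sigma(F(X,n))$ as the claimed power series (infinite product, $j$th term of degree $j$).

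I expect the main obstacle to lie in Part 1: the color involution $u^p v^q\mapsto u^{n-p}v^{n-q}$ is the map $m \mapsto (uv)^n m^{-1}$ on monomials, which is \emph{not} a grading-preserving ring automorphism of $\mathbb{Z}[u,v]$, so Step 3 cannot be phrased as invariance under one automorphism and must instead be executed degree-by-degree on the graded pieces $HD(X)z^n$ and stated as the equality $\phi_{+}=\phi_{-}$ of two honest specializations. In Part 2 the analogous delicate point is the degree bookkeeping across the two gradings (the auxiliary variable $t$ tracking $n$ versus symmetric-function degree) when matching ``$j$th factor'' to ``degree $j$'' after the basis change, together with justifying that $\Phi$ genuinely carries the plethystic-exponential form for \emph{all} $\sigma$ rather than only after the sign-representation specialization recorded in Getzler's corollary; this is where I would lean most heavily on the equivariant framework of \cite{G}.
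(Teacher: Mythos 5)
Your Part~1 is correct but takes a genuinely different route from the paper. The paper uses Hodge symmetry to regard $\overline{e}(X)(u,v)$ as a symmetric polynomial in the two variables $u,v$, rewrites the functional equation $(uv)^n\,\overline{e}(X)(u^{-1},v^{-1}) = \overline{e}(X)(u,v)$ as a polynomial identity in the elementary symmetric polynomials $e_k(u,v)$, converts that identity into one among products of power sums (via $e_r = \sum_{\lambda} \pm z_\lambda^{-1}p_\lambda$ and identities such as $u^rv^s + u^sv^r = p_rp_s - p_{r+s}$), and finally re-expresses the power-sum relation in the chromatic basis of Theorem~\ref{chromgen}. You instead keep the abstract ring $\Lambda$ and compare its two specializations $\phi_{+}$, $\phi_{-}$ at the finite color alphabet $\{u^pv^q\}$ (with multiplicities $h^{p,q}$) and its Serre-dual alphabet; since power sums of a finite multiset determine the multiset in characteristic zero, and the $X_{G_\lambda}$ form a $\mathbb{Q}$-basis of $\Lambda$, Serre duality becomes exactly the family of relations $\phi_{+}(X_{G_\lambda}) = \phi_{-}(X_{G_\lambda})$. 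This is a clean equivalence, and it has the advantage of meshing with the coloring framework of Theorem~\ref{hdcolorpleth} (where the colors \emph{are} the monomials $u^pv^q$), whereas the paper's version produces relations among chromatic symmetric polynomials evaluated at the geometric variables $u,v$ themselves. Your closing remark that the duality map $m \mapsto (uv)^n m^{-1}$ is not a ring automorphism, so one must compare two specializations degree by degree rather than impose invariance under one map, is exactly the right way to handle the only delicate point.

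Part~2, however, has a genuine gap, which you partially flag yourself. Everything you actually expand, namely $\log\prod_{p,q}(1+tu^pv^q)^{h^{p,q}} = \sum_{j\ge 1}\frac{(-1)^{j-1}}{j}t^j\,(p_j\odot e(X))$, is the \emph{sign-representation} generating function from Getzler's Corollary~5.7. That is a single specialization of your master series $\Phi$: it records one character value for each $n$ and therefore cannot determine $\overline{e}_\sigma(F(X,n))$ for general $\sigma$; in particular ``reading off the $p_\lambda$-coefficient'' from it is not a meaningful operation, and no amount of basis change in $\Lambda$ can recover the lost equivariant data. The statement you would need is precisely the one the paper invokes directly (Theorem~\ref{equivserconf}, Getzler, p.~2 of \cite{G}): for $\sigma$ with $n_j$ cycles of length $j$, $e_\sigma(F(X,n)) = \prod_{j}\alpha_j(X)\,(\alpha_j(X)-j)\cdots(\alpha_j(X)-(n_j-1)j)$ with $\alpha_j(X) = \sum_{d\mid j}\mu(j/d)\,e(X;u^d,v^d)$. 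Once this is cited, the paper's proof is short: substitute $(-u,-v)$ so that $\overline{\alpha}_j(X)$ is an honest (inhomogeneous) symmetric polynomial in $u,v$, split it into homogeneous pieces, and expand each piece in the chromatic basis of Theorem~\ref{chromgen}; the product over $j$ is already present in Getzler's formula and never needs to be reconstructed from a logarithm. So your plan is repaired by invoking Getzler's general-cycle-type formula up front instead of attempting to bootstrap it from the sign-representation corollary; as written, that bootstrapping is the missing idea, not a routine verification to be deferred to \cite{G}.
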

	
	\begin{proof} ~\\
		
		\vspace{-5mm}
		
		\begin{enumerate}
			\item Since the chromatic symmetric polynomials are linear combinations of products of power sums, this reduces to an algebraic relation in terms of power sums. This applies the fact that degree $n$ homogeneous symmetric polynomials have chromatic symmetric polynomials as a $\mathbb{Q}$-basis (Theorem \ref{chromgen}) to a result of Florentino--Nozad--Zamora (Theorem 1.1 on p. 2 of \cite{FNZ}) making the connection between the generating functions of this character variety and that of one of its irreducible components. \\
						
			We can write the Serre duality-induced symmetry condition $h^{p, q} = h^{n - p, n - q}$ in terms of products of power sums (which can then be converted into a relation involving chromatic symmetric polynomials). This involves a relation with products of power sums. Since any symmetric polynomial can be written as a polynomial in the elementary symmetric polynomials $e_k$, the Serre duality identity $(uv)^n \overline{e}(X)(u^{-1}, v^{-1}) = \overline{e}(X)(u, v)$ can be rewritten as $(uv)^n P(\widetilde{e}_1, \ldots, \widetilde{e}_n) = P(e_1, \ldots, e_n)$ with $\widetilde{e}_k(u, v) := e_k(u^{-1}, v^{-1}) = \frac{e_{n - k(u, v)}}{e_n(u, v)}$. By Proposition 7.7.5 on p. 300 of \cite{Sta2}, we have that $e_r = \sum_{\lambda \dashv r} e_\lambda z_\lambda^{-1} p_\lambda$ with $e_\lambda = (-1)^{m_2 + m_4 + \ldots}$ for a partition $\lambda = (1^{m_1} 2^{m_2} \cdots)$ of $r$ (p. 299 of \cite{Sta2}). In the context of permutation representations, this depends on whether a permutation is even or odd. \\
			
			Let $a_m = (uv)^m$. Then, we have that $a_m = e_{2m} - p_{2m} - a_1 p_{2m - 2} - \ldots - a_{m - 1} p_2$ since $e_r = u^r + u^{r - 1} v + \ldots + u v^{r - 1} + v^r$. Also, note that $u^r v^s + u^s v^r = p_r p_s - p_{r + s}$. Since $e_r = \sum_{\lambda \dashv r} z_\lambda^{-1} p_\lambda$ and $(uv)^m = e_m - p_m - \sum_{r = 1}^{m - 1} (u^r v^{m - r} + u^{m -  r} v^r) = e_m - p_m - \sum_{r = 1}^{m - 1} (p_r p_{m - r} - p_m)$, we can express $(uv)^m$ as a $\mathbb{Q}$-linear combination of power sums $p_\lambda$ with $\lambda \dashv 2m$. This means that $(uv)^n P(\widetilde{e}_1, \ldots, \widetilde{e}_n) = (uv)^n P \left( \frac{e_{n - 1}}{e_n}, \frac{e_{n - 2}}{e_n}, \ldots, \frac{e_1}{e_n}, \frac{1}{e_n} \right) = P(e_1, \ldots, e_n)$ (a polynomial relation in the $e_r$) can be rewritten as a polynomial relation in products of power sums. Clearing denominators and collecting power sums $p_\lambda$ with the same $|\lambda| = \lambda_1 + \ldots + \lambda_r$, this can be rewritten as a relation involving chromatic symmetric polynomials $X_G$. \\
			
			Finally, we can compare the chromatic polynomial decomposition of the original Hodge--Deligne polynomial with that of the plethystic exponential in the previous section. The work above makes some comments on the orignal one since we expressed $u^r v^s + u^s v^r$ as a $\mathbb{Q}$-linear combination of products of power sums $p_\lambda$. This can be compared with the plethysm decomposition in the previous section.

			\item This is an application of Theorem \ref{chromgen} to the following result of Getzler \cite{G}:
			
				\begin{thm} (Getzler, p. 2 of \cite{G}) \label{equivserconf} \\
					Let $X$ be a quasiprojective vairety, and let $\alpha_j(X) = \sum_{d | j} \mu \left( \frac{j}{d} \right) e(X; u^d, v^d)$. \\
					
					If $\sigma \in S_n$ is a permutation with $n_j$ cycles of length $j$, then \[ e_\sigma(F(X, n)) = \prod_{j = 1}^\infty \alpha_j(X) (\alpha_j(X) - j) \cdots (\alpha_j(X) - (n_j - 1)j), \]
					
					where $F(X, n)$ is the configuration space of $n$ ordered points on $X$. 
				\end{thm}
			
			The result follows from writing $\alpha_j(X) = \sum_{d | j} \mu \left( \frac{j}{d} \right) e(X; u^d, v^d)$ as a polynomial in chromatic symmetric polynomials. In order to obtain an actual symmetric polynomial, we will actually look at the same polynomial $-u$ and $-v$ substituted in for $u$ and $v$, which we denote by $\overline{\alpha}_j(X)(u, v) := \alpha_j(X)(-u, -v)$. Similarly, we can set $\overline{e}(X)(u, v) = e(X)(-u, -v)$. While $\overline{\alpha}_j(X)$ is a symmetric polynomial, it is not homogeneous. We can split $\overline{\alpha}_j(X)$ into a sum where each term is a \emph{homogeneous} symmetric polynomial by partitioning by degrees. Then, Theorem \ref{chromgen} implies  $\overline{\alpha}_j(X)$ can be written as a sum of chromatic symmetric polynomials associated to a set of connected graphs $\{ G_k \}$ with $G_k$ a connected graph with $k$ vertices. 
		\end{enumerate}
	\end{proof}
	
	\begin{rem}
		Some amusing connections with enumerative questions about graphs are that the relation  $PExp \left( \sum_{p, q \ge 0} a_{p, q} u^p v^q y \right)  = \prod_{p, q \ge 0} (1 - u^p v^q)^{-a_{p, q}}$ is similar to the one between the generating function for the number of graphs with a given number of points and edges and the number of \emph{connected} graphs with these properties (equation between (28) and (29) on p. 457 of \cite{Ha}) and that the Serre duality-induced relation $h^{p, q} = h^{n - p, n - q}$ is somewhat similar to the relation $g_{p, k} = g_{p, p(p - 1)/2 - k}$ satisfied by the connected graph generating functions (p. 451 of \cite{Ha}).  
		
	\end{rem}

Department of Mathematics, University of Chicago \\
5734 S. University Ave, Office: E 128 \\ Chicago, IL 60637 \\
\textcolor{white}{text} \\
Email address: \href{mailto:shpg@uchicago.edu}{shpg@uchicago.edu} 
\end{document}